\newtheorem{theorem}{Theorem}[section]
\newtheorem{definition}{Definition}
\newtheorem{lemma}[theorem]{Lemma}
\newtheorem{proposition}[theorem]{Proposition}
\newtheorem{corollary}[theorem]{Corollary}
\newtheorem{remark}[theorem]{Remark}
\newtheorem{example}[theorem]{Example}
\newtheorem{question and remark}[theorem]{Question and Remark}
\newtheorem{setup}[theorem]{Setup}
\newtheorem{problems}[theorem]{Problems}
\def\Ext {\mathop{\rm Ext}\nolimits}
\def\Soc {\mathop{\rm Soc}\nolimits}
\DeclareMathOperator{\Hom}{Hom}
\DeclareMathOperator{\Ker}{Ker}
\DeclareMathOperator{\Ass}{Ass}
\DeclareMathOperator{\ann}{ann}
\DeclareMathOperator{\cht}{cht}
\DeclareMathOperator{\hit}{ht}
\DeclareMathOperator{\Specm}{Specm}
\DeclareMathOperator{\Spec}{Spec}
\DeclareMathOperator{\imm}{Im}
\renewcommand\Im{\imm}
\DeclareMathOperator\Min{Min}
\DeclareMathOperator\Max{Max}
\DeclareMathOperator\Supp{Supp}
\DeclareMathOperator\Mod{Mod}
\DeclareMathOperator{\gl}{gl} \DeclareMathOperator{\ssl}{sl}
\newcommand\Ga{\Gamma}
\newcommand\bZ{\mathbb{Z}}
\begin{document}

\title[Torsion theories induced from commutative subalgebras
\  ]{Torsion theories induced from commutative subalgebras}
\author{Vyacheslav Futorny }
\author{Serge Ovsienko  }
\author{Manuel Saorin}
\address{Instituto de Matem\'atica e Estat\'\i stica,
Universidade de S\~ao Paulo, Caixa Postal 66281, S\~ao Paulo, CEP
05315-970, Brasil} \email{futorny@ime.usp.br}
\address{
Faculty of Mechanics and Mathematics, Kiev Taras Shevchenko
University, Vla\-di\-mir\-skaya 64, 00133, Kiev, Ukraine}
\email{ovsienko@zeos.net }
\address{Departamento de Matem\'aticas, Universidad de Murcia, 30100 Espinardo, Murcia, Spain}
\email{msaorinc@um.es} \subjclass[2000]{Primary: 16D60, 16D90,
16D70, 17B65}

\date{}

\thanks{
}
\begin{abstract}
We begin a  study of torsion theories for representations of an
important class of associative algebras over a field which includes
all finite $W$-algebras of type $A$, in particular the universal
enveloping algebra of $\gl_n$ (or $\ssl_n$) for all $n$. If $U$ is
such and algebra which contains  a finitely generated commutative
subalgebra $\Gamma$,  then we show that any $\Gamma$-torsion theory
defined by the coheight of prime ideals is liftable to $U.$
Moreover,
 for any simple $U$-module $M$,  all associated prime ideals of $M$ in $\Spec \Gamma$ have the same coheight. Hence,
 the
  coheight of the associated prime ideals of $\Gamma$ is an invariant of a given simple
 $U$-module. This implies a stratification of the category of $U$-modules controlled by the coheight
 of associated prime ideals
 of $\Gamma$.
Our approach can be viewed as a generalization of the classical
paper by R.Block \cite{bl}, it allows in particular to study
representations of $\gl_n$ beyond the classical category of weight
or generalized weight modules.

\end{abstract}
\maketitle

\section{Introduction}

A classical, very difficult and intriguing problem in
representation theory of Lie algebras is the classification of
simple modules over  complex simple finite dimensional Lie
algebras. Such a classification is only known for the Lie algebra
$\ssl_2$ due to results of R.Block \cite{bl}. It remains an open
problem in general, even in the subcategory of weight  modules
with respect to a fixed Cartan subalgebra. On the other hand, a
classification of simple weight  modules  with finite dimensional
weight spaces is well known for any simple finite dimensional Lie
algebra, due to Fernando \cite{Fe} and Mathieu \cite{Ma}.

The basic idea, proposed in \cite{bl} in the case of $\ssl_{2}$ can
be explained as follows. First, we consider a maximal commutative
subalgebra $\Ga\subset U(\ssl_{2})$ (in our terms, Gelfand-Tsetlin
subalgebra), which is generated by a Cartan subalgebra  and the
center of $U(\ssl_{2})$. Then one fixes a central character $\chi$
of $U(\ssl_{2})$. After that all simples modules with central
character $\chi$ are divided into torsion (or generalized weight)
and torsionfree modules with respect to $\Ga/(\Ker \chi)$.
Thereafter the investigation of both classes of modules is reduced
to the investigation of simples over a (skew) group algebra of the
group $\bZ$. An analogous idea works in the more general context of
generalized Weyl algebras of rank $1$ (\cite{ba}, \cite{bavo}),
which allow a complete classification of simple modules.

A similar approach applied in the case of a Lie algebra $\gl(n)$ (or
$\ssl_n$) allows to go beyond the category of weight modules with
finite dimensional
 spaces. Namely, one considers
 the full subcategory of
weight Gelfand-Tsetlin  $\gl_n$-modules with respect to the
\emph{Gelfand-Tsetlin subalgebra} (certain maximal commutative
subalgebras of $U(\gl_n)$)  \cite{dfo:gz}, \cite{fo-Ga2}, that is,
those modules $V$ that have a decomposition
$$V=\oplus_{{\bf m}\in \Specm \Gamma} V({\bf m}),$$ where $V({\bf
m})=\{v\in V| \exists N, {\bf m}^N v=0\}$ as $\Gamma$-modules.
This class is based on natural properties of a Gelfand-Tsetlin
basis for finite dimensional representations of simple classical
Lie algebras \cite{g-ts},  \cite{zh:cg}, \cite{m:gtsb}.
Gelfand-Tsetlin subalgebras were considered in various connections
in
 \cite{FM}, \cite{Vi}, \cite{KW-1}, \cite{KW-2}, \cite{Gr}.
The theory developed in \cite{fo-Ga1} and \cite{fo-Ga2} was an
attempt to unify the representation theories of the universal
enveloping algebra of $\gl_n$ and of the generalized Weyl algebras.
We underline that Gelfand-Tsetlin modules over $\gl_n$ are weight
modules with respect to some Cartan subalgebra of $\gl_n$ but they
are allowed to have infinite dimensional weight spaces.

  In this paper we begin a  study of general torsion theories for
representations of a certain class of associative algebras which
includes all finite $W$-algebras of type $A$. In particular, the
universal enveloping algebra of $\gl_n$ (or $\ssl_n$) is an
example of such algebra for all $n$, where $\Gamma$ is a
Gelfand-Tsetlin subalgebra.

In the rest of the paper we shall work over an algebraically
closed field $K$ of characteristic zero and consider the following
situation.

\begin{setup} \label{setup}
  $U$ will be a $K$-algebra  having  a  commutative (not necessarily
central) subalgebra $\Gamma$, fixed from now on,  satisfying the
following properties:

\begin{enumerate}
\item $\Gamma$ is finitely generated as a $K$-algebra \item There
is a finite subset $\{u_1,\ldots,u_n\}\subset U$ such that $U$ is
generated as a $K$-algebra by $\Gamma\cup\{u_1,\ldots,u_n\}$ \item
$\Gamma$ is a \emph{Harish-Chandra subalgebra}, i.e., for each $u\in
U$ the $\Gamma$-bimodule $\Gamma u\Gamma$ is a finitely generated
$\Gamma$-module both on the left and on the right.
\end{enumerate}
\end{setup}

If $M$ is a Gelfand-Tsetlin $U$-module with respect to $\Gamma$ then
the associated prime ideals of $V$ in $\Spec \Ga$ which form the
assassin $\Ass(M)$ are maximal. Our goal is to understand torsion
categories of modules over $U$ more general than Gelfand-Tsetlin
categories. Such modules have associated primes in $\Spec \Ga$ which
are not maximal.

Our main result is the following theorem. We refer to
Section~\ref{sec-torsion} for definitions.

\begin{theorem} \label{theorem A}
  Let $\Gamma$ be a finitely generated
subalgebra  and $U\supset \Gamma$ as above. Then
\begin{enumerate}
\item
 The $\Ga$-torsion theory associated to the subset $Z_i\subset Spec(\Gamma )$ of prime ideals of coheihgt $\leq i$ is liftable to $U.$
\item For any simple $U$-module $M$ all associated prime ideals of
$M$ in $\Spec \Ga$ have the same height.
\end{enumerate}
\end{theorem}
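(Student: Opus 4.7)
My plan is to prove (1) first and then derive (2) from it by a filtration argument. For (1), since $U$ is generated by $\Ga\cup\{u_1,\ldots,u_n\}$, liftability reduces to the following claim: for any $U$-module $M$, any $t\in M$ with $V(\ann_\Ga t)\subseteq Z_i$, and any generator $u=u_j$, also $V(\ann_\Ga(ut))\subseteq Z_i$. Fix such a $u$; by the Harish-Chandra hypothesis the $\Ga$-bimodule $X:=\Ga u\Ga$ is finitely generated on both sides, and this is the only structural input I will need.

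The central geometric observation I will establish is that the image $\bar\Ga^{(2)}$ of the natural ring map $\Ga\otimes_K\Ga\to \End_K(X)$ (sending $\gamma\otimes\delta$ to $x\mapsto \gamma x\delta$) is a commutative ring, and is finitely generated as a $\Ga$-module via each of the two inclusions $\iota_l,\iota_r\colon \Ga\to \bar\Ga^{(2)}$. Commutativity is clear because both $\iota_l(\Ga)$ and $\iota_r(\Ga)$ are commutative subrings and they commute with each other. For finiteness one uses that $\Ga$ is Noetherian (being a finitely generated $K$-algebra) together with the standard fact that, for a finitely generated module $X$ over a Noetherian commutative ring $R$, the embedding $\End_R(X)\hookrightarrow X^n$ (via a free cover) forces $\End_R(X)$ to be finitely generated over $R$; applied to $R=\iota_l(\Ga)$ this gives $\bar\Ga^{(2)}\subseteq \End_{\iota_l(\Ga)}(X)$ finitely generated over $\iota_l(\Ga)$, and symmetrically for $\iota_r$. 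Consequently the two projections
\[
\pi_l,\pi_r\colon \Spec\bar\Ga^{(2)}\longrightarrow \Spec\Ga
\]
are finite morphisms and, by going-up for integral extensions, preserve coheight of primes: $\cht(\pi_\bullet(P'))=\cht(P')$.

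Next, I use $It=0$ to obtain a well-defined, left-$\Ga$-linear map $X/XI\to M$, $\bar x\mapsto xt$, whose image contains $ut$; hence $V(\ann_\Ga(ut))\subseteq \Supp_\Ga(X/XI)$ (left support). Viewed as a $\bar\Ga^{(2)}$-module, $X/XI$ is annihilated by $\iota_r(I)$, so its support in $\Spec\bar\Ga^{(2)}$ lies in $\pi_r^{-1}(V(I))$; by finiteness of $\pi_r$ every such prime has coheight $\leq i$, and applying the finite map $\pi_l$ transports this bound to $\Spec\Ga$, proving $\Supp_\Ga(X/XI)\subseteq Z_i$ and hence (1). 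For (2), the chain $0\subseteq \tau_{Z_0}(M)\subseteq \tau_{Z_1}(M)\subseteq\cdots\subseteq M$ consists of $U$-submodules by (1) and exhausts $M$ once $i\geq\dim\Ga$; simplicity of $M$ forces a unique jump at some $i_0$ with $\tau_{Z_{i_0-1}}(M)=0$ and $\tau_{Z_{i_0}}(M)=M$. For $P\in\Ass_\Ga M$ with $P=\ann_\Ga m$, the inclusion $\Ga/P\cong \Ga m\hookrightarrow M$ gives $V(P)\subseteq Z_{i_0}$, so $\cht(P)\leq i_0$; while $m\ne 0$ together with $\tau_{Z_{i_0-1}}(M)=0$ forces $V(P)\not\subseteq Z_{i_0-1}$, so $\cht(P)>i_0-1$. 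Hence $\cht(P)=i_0$ for every $P\in\Ass_\Ga M$.

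The main technical obstacle is the finiteness of $\bar\Ga^{(2)}$ over $\Ga$ on both sides: without the two-sided Harish-Chandra finiteness of $\Ga u\Ga$, the projections $\pi_l,\pi_r$ need not be finite, going-up does not apply, and the coheight-transport mechanism on which both parts of the theorem rest would collapse. Once this bimodule finiteness is secured, the rest is a short routine combining standard commutative-algebra facts with a Schur-style filtration argument.
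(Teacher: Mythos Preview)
Your argument is correct, and for part (1) it follows a genuinely different route from the paper.

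The paper proves liftability of $(\mathcal{T}_i,\mathcal{F}_i)$ by first establishing a general criterion (their Proposition~\ref{liftability_-_general_criterion}): it suffices to show that $U/U\mathbf{p}\in\mathcal{T}_i$ for each $\mathbf{p}$ minimal in $Z_i$. They then verify this by a growth computation: fixing $u\in U$, they choose finite generators of $\Gamma u\Gamma$ as a right $\Gamma$-module, set up compatible filtrations, and bound the Gelfand--Kirillov dimension of $\Gamma/(U\mathbf{p}:u)$ by $\gkdim(\Gamma/\mathbf{p})=i$; since for finitely generated commutative $K$-algebras $\gkdim=\mathrm{Kdim}$, this yields the coheight bound. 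The case $i=0$ is treated separately by a direct finite-dimensionality argument.

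Your approach replaces the growth estimate by pure commutative algebra. The key construction is the commutative ring $\bar\Gamma^{(2)}=\mathrm{im}(\Gamma\otimes_K\Gamma\to\End_K(\Gamma u\Gamma))$, which is module-finite over $\Gamma$ via each of $\iota_l,\iota_r$ thanks to the two-sided Harish-Chandra finiteness. The two associated maps $\pi_l,\pi_r\colon\Spec\bar\Gamma^{(2)}\to\Spec\Gamma$ are then integral, hence preserve $\dim(\,\cdot\,/\text{prime})$, and the coheight bound is transported from the right side (where $\iota_r(I)$ kills $X/XI$) to the left side (which controls $\ann_\Gamma(ut)$) via lying-over. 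This is more structural and handles all $i$ uniformly, with no special treatment of $i=0$ and no appeal to GK-dimension; the paper's approach, by contrast, is more explicit and makes visible exactly how the filtration degrees interact with the bimodule relations $t_iu_j=\sum u_l g_{ij}^l$.

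For part (2) your filtration argument is essentially identical to the paper's: the chain $\tau_{Z_0}(M)\subseteq\tau_{Z_1}(M)\subseteq\cdots$ consists of $U$-submodules by (1), simplicity forces a single jump at some $i_0$, and then Corollary~\ref{T_i-torsion T_i-1-torsionfree} (or the direct computation you give) pins every $P\in\Ass(M)$ to coheight exactly $i_0$.
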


Theorem A provides a stratification of the module category with
respect to the coheight of the associated primes. In classical cases
as finite $W$-algebras
it happens that the endomorphism algebra of any simple $U$-module
is one dimensional  and the center $Z=Z(U)$ of $U$ is an integral
domain (polynomial ring) contained in $\Gamma$, which is in turn
is also an integral domain (polynomial ring) and flat over $Z$.
Under these circumstances (see Proposition \ref{final
proposition},
  all simple objects in the module category $U-Mod$ are
exhausted by simple $U$-modules  whose associated primes have a
fixed coheight $0\leq i\leq Kdim(\Gamma )-Kdim(Z)$, where $Kdim$
denotes the Krull dimension. The case $i=0$ corresponds to
Gelfand-Tsetlin modules (with respect to $\Gamma$) and the case
$i=Kdim(\Gamma )-Kdim(Z)$ corresponds to the  simple $U$-modules
which are torsionfree with respect to some  central character
$\chi :Z\longrightarrow K$.

Our second main result provides  information about the assassin of a
simple $U$-module.

\begin{theorem}
 Let $U$, $\Gamma$, $u_1, \ldots, u_n$ be as in Setup~\ref{setup},
$M=Ux$  a cyclic  $U$-module generated by an element $x$ such that
$\ann_\Gamma (x)=\mathbf{p}$ is a prime ideal of $\Gamma$  and
suppose that all ideals in $\Ass (M)$ have the same coheight. If
$\mathbf{q}\in \Ass (M)$ then there is a sequence
$\mathbf{q}=\mathbf{q}_0,\mathbf{q}_1,\ldots,
\mathbf{q}_s=\mathbf{p}$ of prime ideals with coheight equal to the
coheight of $\mathbf{p}$ and a sequence of indices $k_1,\ldots,
k_s\in\{1,\ldots, n\}$ such that

\begin{center}
$\frac{\Gamma u_{k_i}\Gamma}{\mathbf{q}_{i-1}u_{k_i}\Gamma +\Gamma
u_{k_i}\mathbf{q}_i}\neq 0$,
\end{center}
for all $i=1,\ldots, s$.
\end{theorem}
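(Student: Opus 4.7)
The plan is to exploit the filtration $M_0\subset M_1\subset\cdots$ of $M$, where $M_r$ denotes the $\Gamma$-submodule generated by all elements $u_{i_1}\cdots u_{i_j}x$ with $j\leq r$, and to induct on the least $r$ for which $\mathbf{q}\in \Ass(M_r)$. The Harish-Chandra hypothesis (finite generation of each $\Gamma u_k\Gamma$ as a left $\Gamma$-module) yields by induction on $r$ that every $M_r$ is finitely generated over the Noetherian ring $\Gamma$; since $M=Ux=\bigcup_r M_r$ and the $\Gamma$-annihilator of an element is intrinsic, $\Ass(M)=\bigcup_r \Ass(M_r)$. The base case $r=0$ is immediate, since $M_0=\Gamma x\cong \Gamma/\mathbf{p}$ has $\Ass(M_0)=\{\mathbf{p}\}$ and the empty chain suffices.

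For the inductive step, assume $\mathbf{q}\in \Ass(M_r)\setminus \Ass(M_{r-1})$. The standard inclusion $\Ass(M_r)\subset \Ass(M_{r-1})\cup \Ass(M_r/M_{r-1})$ places $\mathbf{q}$ in $\Ass(M_r/M_{r-1})$. The natural surjection
\begin{equation*}
\bigoplus_{k=1}^n \Gamma u_k\Gamma\otimes_\Gamma M_{r-1}\twoheadrightarrow M_r/M_{r-1},\qquad a\otimes m\longmapsto am+M_{r-1},
\end{equation*}
then forces $\mathbf{q}\in \Supp\bigl(\Gamma u_{k_1}\Gamma\otimes_\Gamma M_{r-1}\bigr)$ for some index $k_1\in\{1,\ldots,n\}$.

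The core step is to refine this support condition into the required bimodule condition. Fix a prime filtration $0=L^0\subset L^1\subset\cdots\subset L^t=M_{r-1}$ with $L^i/L^{i-1}\cong \Gamma/\mathbf{p}'_i$. Right exactness of $\Gamma u_{k_1}\Gamma\otimes_\Gamma -$ induces a filtration on $\Gamma u_{k_1}\Gamma\otimes_\Gamma M_{r-1}$ whose successive subquotients are quotients of $\Gamma u_{k_1}\Gamma/\Gamma u_{k_1}\Gamma\,\mathbf{p}'_i$; consequently $\mathbf{q}$ lies in the support of some such $\Gamma u_{k_1}\Gamma/\Gamma u_{k_1}\Gamma\,\mathbf{p}'_i$, and Nakayama's lemma gives $\Gamma u_{k_1}\Gamma/(\mathbf{q}u_{k_1}\Gamma+\Gamma u_{k_1}\mathbf{p}'_i)\neq 0$. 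If $\mathbf{p}'_i\notin \Ass(M_{r-1})$, replace it with an associated prime $\mathbf{q}_1\in \Ass(M_{r-1})\subset \Ass(M)$ contained in $\mathbf{p}'_i$; the surjection $\Gamma u_{k_1}\Gamma/\Gamma u_{k_1}\Gamma\mathbf{q}_1\twoheadrightarrow \Gamma u_{k_1}\Gamma/\Gamma u_{k_1}\Gamma\mathbf{p}'_i$ transports nonvanishing, so $\Gamma u_{k_1}\Gamma/(\mathbf{q}u_{k_1}\Gamma+\Gamma u_{k_1}\mathbf{q}_1)\neq 0$ as well. By hypothesis all primes in $\Ass(M)$ share the coheight of $\mathbf{p}$, so $\mathbf{q}_1$ has the required coheight.

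With $\mathbf{q}_1\in \Ass(M_{r-1})$ and $k_1$ in hand, apply the induction hypothesis to $\mathbf{q}_1$ to obtain a chain $\mathbf{q}_1,\mathbf{q}_2,\ldots,\mathbf{q}_s=\mathbf{p}$ with indices $k_2,\ldots,k_s$, and prepend the pair $(\mathbf{q},k_1)$ to produce the chain demanded by the theorem. The main obstacle is precisely the commutative-algebra manipulation just described: passing from $\mathbf{q}\in \Supp(\Gamma u_{k_1}\Gamma\otimes_\Gamma M_{r-1})$ to an honest associated prime $\mathbf{q}_1\in \Ass(M_{r-1})$ of the correct coheight. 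The equidimensionality assumption on $\Ass(M)$ is indispensable here, for without it the witness prime $\mathbf{p}'_i$ produced by the filtration need not be associated and might have strictly smaller coheight, invalidating the inductive construction.
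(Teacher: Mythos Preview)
Your strategy is sound and the inductive engine (prime filtration of $M_{r-1}$, Nakayama, replacing a filtration prime by an associated prime beneath it) is exactly right; it reproduces the content of the paper's Lemma~\ref{crucial_lemma_on_assassins}(1). However, your specific filtration is wrong. With $M_r$ defined as the $\Gamma$-span of the bare monomials $u_{i_1}\cdots u_{i_j}x$ (no $\Gamma$-coefficients inserted between the $u$'s), neither $\bigcup_r M_r=M$ nor $\Gamma u_k\Gamma\cdot M_{r-1}\subset M_r$ need hold, so your ``natural surjection'' onto $M_r/M_{r-1}$ is not well defined. A concrete failure: take $\Gamma=K[t]$, let $U$ be generated by $t,u$ with the single relation $tu=ut^2$; one checks $\Gamma$ is Harish--Chandra, $\Gamma u\Gamma=uK[t]$, but $\Gamma u=uK[t^2]$. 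For $M=U$, $x=1$ one gets $M_r=\bigoplus_{j\le r}u^jK[t^{2^j}]$, so $ut\in\Gamma u\Gamma\cdot M_0$ yet $ut\notin M_1$, and indeed $ut\notin\bigcup_r M_r$. (That your $M_r$ is \emph{trivially} finitely generated, without invoking Harish--Chandra, already signals the mismatch.) The fix is immediate: set $M_r=V_rx$ with $V_r=\sum_{j\le r}\Gamma u_{i_1}\Gamma u_{i_2}\cdots\Gamma u_{i_j}\Gamma$. Then $V_r=V_{r-1}+\sum_k\Gamma u_k V_{r-1}$, finite generation of $V_r$ on the left genuinely uses Harish--Chandra, $\bigcup_r M_r=M$, and your displayed map really is a surjection onto $M_r/M_{r-1}$. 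With this correction your argument is complete.

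As for the comparison: the paper does not filter $M$. It writes $\mathbf{q}=\ann_\Gamma(ux)$, expands $u$ as a sum of monomials $g_1u_{k_1}g_2\cdots g_ru_{k_r}g_{r+1}$, and uses Lemma~\ref{crucial_lemma_on_assassins}(2) (the inclusion $\Ass(\Gamma(y+z))\subset\Ass(\Gamma y)\cup\Ass(\Gamma z)$, valid under the equal-coheight hypothesis) to reduce to a single monomial; then Lemma~\ref{crucial_lemma_on_assassins}(1) peels off the leftmost $u_{k_1}$ and recurses down the word. Your filtration replaces the sum-splitting lemma by the standard inclusion $\Ass(M_r)\subset\Ass(M_{r-1})\cup\Ass(M_r/M_{r-1})$, which does not need the coheight hypothesis at that stage; you then use the hypothesis only at the end, to certify that the witness $\mathbf{q}_1\in\Ass(M_{r-1})$ has the right coheight. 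So the two proofs are reorganizations of the same ideas, with yours having the mild advantage of not isolating a separate lemma for handling sums.
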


All these results can be applied to the class of Galois orders
over finitely generated Noetherian domains \cite{fo-Ga1}. In
particular,  the results are valid for all finite $W$ algebras of
type $A$, e.g. $U(\gl_n)$ for all $n$.

\section{Torsion theories over a commutative Noetherian
ring}\label{sec-torsion}

In this section we collect some  facts concerning torsion theories
over commutative Noetherian rings. Recall that, given a not
necessarily commutative ring $R$, a \emph{torsion theory} over $R$ is
a pair $(\mathcal T,\mathcal F)$ of full subcategories of $R-\Mod$
satisfying the following two conditions:

\begin{enumerate}
\item $\mathcal T =^\perp\mathcal F$ consists of those $R$-module
$T$ such that $\Hom_R(T,F)=0$, for all $F\in\mathcal F$ \item
$\mathcal F =\mathcal T^\perp$ consists of those $R$-module $F$ such
that $\Hom_R(T,F)=0$, for all $T\in\mathcal T$
\end{enumerate}

Note that any of the component class of a torsion theory determines
the other. In the above situation, for every $R$-module $M$ there
exists a (unique up to isomorphism) exact sequence

\begin{center}
$0\rightarrow T\longrightarrow M\longrightarrow F\rightarrow 0$,
\end{center}
with $T\in\mathcal T$ and $F\in\mathcal F$.  Then the assignments
$M\rightsquigarrow t(M):=T$ and $M\rightsquigarrow F=:M/t(M)$ are
functorial and yield a right adjoint and a left adjoint,
respectively, to the inclusion functors $\mathcal T\hookrightarrow
R-\Mod$ and $\mathcal F\hookrightarrow R-\Mod$. The functor
$t:R-\Mod\longrightarrow\mathcal{T}$ is called the \emph{torsion
radical} associated to $\mathcal{T}$. The torsion theory is called
\emph{hereditary} when $\mathcal T$ is closed under taking
submodules, which is equivalent to say that $\mathcal F$ is closed
under taking injective envelopes (see chapter VI of \cite{St} for
all details and terminology concerning torsion theories).

In this paper we are mainly interested in torsion theories over
commutative Noetherian rings. In this section, unless otherwise
stated, $\Gamma$ \emph{will be a commutative Noetherian ring}, We
shall denote by $\Spec \Gamma$ (resp. $\Specm  \Gamma$) the prime
(resp. maximal) spectrum of $\Gamma$.  Given a $\Gamma$-module $M$
and a prime ideal $\mathbf{p}\in \Spec\Gamma$, we shall denote by
$M_{\mathbf{p}}$ the localisation of $M$ at $\mathbf{p}$. We shall
consider two important subsets of $\Spec\Gamma$ associated to $M$.
Namely the \emph{support} of $M$,
 $\Supp (M)=\{\mathbf{p}\in
\Spec \Gamma |\, M_{\mathbf{p}}\neq 0\}$, and the so-called
\emph{assassin} of $M$, $\Ass (M)$, which consists of those
$\mathbf{p}\in \Spec(\Gamma )$ such that $\mathbf{p}=\ann_\Gamma
(x):=\{g\in\Gamma : $ $gx=0\}$, for some $x\in M$.

We now recall some properties of these sets. In the statement and in
the sequel, we denote by $\Min X$ (resp. $\Max X$) the set of minimal
(resp. maximal) elements of $X$, for every subset $X\subset
\Spec\Gamma$.

\begin{proposition} \label{minimal_in_Ass_and_Supp }
Let $X\subseteq \Spec\Gamma$ be any nonempty subset and $M$ be a
$\Gamma$-module. The following assertions hold:

\begin{enumerate}
\item Every element of $X$ contains a minimal element of $X$ \item
$\Ass (M)\subseteq \Supp (M)$ and $\Min \Ass (M)=\Min \Supp (M)$.
\end{enumerate}
\end{proposition}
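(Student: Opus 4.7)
The plan is to handle the two parts in sequence, each by invoking a standard piece of commutative algebra for Noetherian rings.

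For part (1) I would exploit the fact that every prime of a commutative Noetherian ring has finite height (Krull's height theorem). Given $\mathbf{p}\in X$, the subset $X_{\mathbf{p}}:=\{\mathbf{q}\in X\,:\,\mathbf{q}\subseteq \mathbf{p}\}$ is non-empty, and any strictly descending chain in $X_{\mathbf{p}}$ is a strictly descending chain of primes of $\Gamma$ contained in $\mathbf{p}$, hence of length at most $\hit(\mathbf{p})<\infty$. So $X_{\mathbf{p}}$ satisfies the descending chain condition and admits a minimal element, which is manifestly a minimal element of $X$ contained in $\mathbf{p}$.

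For part (2) the inclusion $\Ass(M)\subseteq \Supp(M)$ is essentially a one-line check: if $\mathbf{p}=\ann_\Gamma (x)$ for some $x\in M$, then no element of $\Gamma\setminus\mathbf{p}$ annihilates $x$, so $x/1\neq 0$ in $M_{\mathbf{p}}$ and hence $\mathbf{p}\in \Supp(M)$. To obtain $\Min \Ass(M)=\Min \Supp(M)$ I would invoke the classical fact that, over a commutative Noetherian ring, every prime in $\Supp(M)$ contains some associated prime of $M$ (proved by localizing at the prime in question and using that the non-zero module $M_{\mathbf{p}}$ over the Noetherian local ring $\Gamma_{\mathbf{p}}$ has an associated prime, which corresponds to an element of $\Ass(M)$ contained in $\mathbf{p}$). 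With this in hand, any $\mathbf{q}\in \Min \Supp(M)$ must coincide with the associated prime it contains, so $\mathbf{q}\in \Ass(M)$, and then $\mathbf{q}\in \Min \Ass(M)$ because $\Ass(M)\subseteq \Supp(M)$. Conversely, if $\mathbf{q}\in \Min\Ass(M)$ and $\mathbf{q}'\in \Supp(M)$ with $\mathbf{q}'\subseteq \mathbf{q}$, then $\mathbf{q}'$ would contain some $\mathbf{p}\in \Ass(M)$ with $\mathbf{p}\subseteq \mathbf{q}$, forcing $\mathbf{p}=\mathbf{q}$ by minimality in $\Ass(M)$ and hence $\mathbf{q}'=\mathbf{q}$.

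I do not expect any genuine obstacle; both statements reduce to standard invocations. The only subtle point is the one ingredient used in part (2), namely that every support prime dominates an associated prime, but this is a well-known consequence of the good behaviour of $\Ass$ under localization for finitely generated modules over Noetherian rings (or more generally for any module, via the same localization argument).
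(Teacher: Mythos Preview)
Your proof is correct. Part (1) is identical to the paper's: both use finiteness of height (Krull) to get DCC on primes below a given $\mathbf{p}\in X$, and then extract a minimal element of $X$.

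For part (2) the arguments diverge in the key lemma used to pass from $\Supp$ to $\Ass$. You invoke the compatibility of $\Ass$ with localization over a Noetherian base, so that every $\mathbf{p}\in\Supp(M)$ dominates some element of $\Ass(M)$; from this single fact both inclusions between $\Min\Ass(M)$ and $\Min\Supp(M)$ fall out symmetrically. The paper instead writes $\Supp(M)=\bigcup_{N}\Supp(N)$ as a union over finitely generated submodules $N\leq M$, deduces that any $\mathbf{p}\in\Min\Supp(M)$ lies in $\Min\Supp(N)$ for some such $N$, and then cites the finitely generated case $\Min\Supp(N)\subseteq\Ass(N)$ (\cite{mat}, Theorem~6.5) to conclude $\mathbf{p}\in\Ass(M)$; the reverse inclusion is then obtained via part (1). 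Your route is a bit more streamlined and handles both inclusions with the same tool; the paper's route has the minor advantage of only needing the $\Ass$/$\Supp$ relationship in the finitely generated case, which some readers may regard as more elementary. Both are entirely standard and neither has a gap.
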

\begin{proof}
The set $\Spec\Gamma$ satisfies DCC with respect to inclusion. Indeed
if $\mathbf{p}=\mathbf{p}_0\supseteq\mathbf{p}_1\supseteq ...$ is a
descending chain of prime ideals, then the number of nonzero terms
in it is bounded above by the height of $\mathbf{p}$, which is
always finite (cf. \cite{mat}[Theorem 13.5]).

If $X\subseteq \Spec\Gamma$ is any nonempty subset and $\mathbf{p}\in
X$, then, by the DCC property, the set $\{\mathbf{q}\in X :$
$\mathbf{q}\subseteq\mathbf{p}\}$ has a minimal element  which is
turn a minimal element of $X$.

Let now take $\mathbf{p}\in \Ass (M)$, so that $\mathbf{p}=\ann_\Gamma
(\Gamma x)$, for some $x\in M$. Then $\mathbf{p}\in \Ass (\Gamma
x)\subseteq \Supp (\Gamma x)$ (see \cite{mat}[Theorem 6.5]). Putting
$N=\Gamma x$, we get that $N_\mathbf{p}\neq 0$, which implies that
$M_\mathbf{p}\neq 0$ due to the exactness of localization. Then
$\Ass (M)\subseteq \Supp (M)$.

Since $M$ is the directed union of its finitely generated submodules
and localization is exact and preserves direct unions it follows
that $\Supp (M)=\bigcup_{N<M}\Supp (N)$, where the union is taken over
all finitely generated submodules $N$ of $M$. In particular, if
$\mathbf{p}\in \Min \Supp (M)$ then $\mathbf{p}\in \Min \Supp (N)$, for some
$N<M$ finitely generated. But then $\mathbf{p}\in \Ass (N)$ (cf.
\cite{mat}[Theorem 6.5]), and so $\mathbf{p}\in \Ass (M)$. From the
inclusion $\Ass (M)\subseteq \Supp (M)$ we conclude that $\mathbf{p}\in
\Min \Ass (M)$, thus proving that $\Min \Supp (M)\subseteq \Min \Ass (M)$.

Conversely, if $\mathbf{p}\in \Min \Ass (M)$ then we fix a cyclic
submodule $N=\Gamma x$ such that $\mathbf{p}=\ann_\Gamma (N)$. Then
we have $\mathbf{p}\in \Ass (N)\subset \Supp (N)\subset \Supp (M)$. By
assertion 1, there exists $\mathbf{q}\in \Min \Supp (M)$ such that
$\mathbf{q}\subseteq\mathbf{p}$. But equality must hold since we
already know that $\Min \Supp (M)\subseteq \Min \Ass (M)$ and $\mathbf{p}$
is minimal in $\Ass (M)$. Therefore $\mathbf{p}\in \Min \Supp (M)$ and we
get that $\Min \Ass (M)=\Min \Supp (M)$.

\end{proof}

\begin{definition}
A subset $Z\subseteq \Spec \Gamma$ is called \emph{closed under
specialization} when the following property holds:

(*) If $\mathbf{p}\subseteq\mathbf{q}$ are prime ideals with
$\mathbf{p}\in Z$, then $\mathbf{q}$ belongs to $Z$.
\end{definition}

The prototypical examples of  closed under specialization subsets of
$\Spec \Gamma$ are the Zariski-closed subsets  and those  of the form
$\Supp (M)$, where $M$ is a $\Gamma$-module. The following is a
crucial result from \cite{Ga}.

\begin{theorem} \label{Gabriel_bijection}
Let $\Gamma$ be a commutative Noetherian ring. The assignments
$Z\rightsquigarrow (\mathcal T_Z,\mathcal T_Z^\perp )$, where
$\mathcal T_Z=\{T\in \Gamma -\Mod:$ $\Supp (T)\subseteq Z\}$,  and
$(\mathcal T,\mathcal F)\rightsquigarrow Z_{(\mathcal T,\mathcal
F)}=\{\mathbf{p}\in \Spec \Gamma :$ $\Gamma/\mathbf{p}\in\mathcal
T\}$ define mutually inverse order-preserving one-to-one
correspondences between the closed under specilization subsets of
$\Spec \Gamma$ and the hereditary torsion theories in $\Gamma -\Mod$.
\end{theorem}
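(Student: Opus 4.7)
The plan is to verify four items in sequence: (a) for every specialization-closed $Z\subseteq \Spec\Gamma$, the class $\mathcal{T}_Z$ is a hereditary torsion class; (b) for every hereditary torsion theory $(\mathcal{T},\mathcal{F})$, the set $Z_{(\mathcal{T},\mathcal{F})}$ is specialization-closed; (c) the two composites are the identity on their respective sides; and (d) both assignments preserve inclusions. Part (d) is immediate from the definitions, and parts (a)--(b) reduce to standard bookkeeping with supports and quotients, so the substantive work is in (c), particularly the direction that starts from a hereditary torsion theory.

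For (a), I would invoke the familiar properties of support over a commutative Noetherian ring: $\Supp(N)\subseteq\Supp(M)$ for a submodule $N\leq M$, $\Supp(M/N)\subseteq\Supp(M)$ for a quotient, $\Supp(\bigoplus_i M_i)=\bigcup_i\Supp(M_i)$, and $\Supp(M)=\Supp(M')\cup\Supp(M'')$ in a short exact sequence $0\to M'\to M\to M''\to 0$. These give closure of $\mathcal{T}_Z$ under submodules, quotients, coproducts, and extensions, hence $\mathcal{T}_Z$ is a hereditary torsion class. For (b), if $\mathbf{p}\subseteq\mathbf{q}$ are primes with $\Gamma/\mathbf{p}\in\mathcal{T}$, then $\Gamma/\mathbf{q}$ is a quotient of $\Gamma/\mathbf{p}$ and hence lies in $\mathcal{T}$ by closure under quotients, so $\mathbf{q}\in Z_{(\mathcal{T},\mathcal{F})}$.

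For (c), the identity $Z_{(\mathcal{T}_Z,\mathcal{T}_Z^\perp)}=Z$ uses the computation $\Supp(\Gamma/\mathbf{p})=\{\mathbf{q}\in\Spec\Gamma:\mathbf{p}\subseteq\mathbf{q}\}$; thus $\Gamma/\mathbf{p}\in\mathcal{T}_Z$ iff every prime containing $\mathbf{p}$ lies in $Z$, which by specialization-closure of $Z$ is equivalent to $\mathbf{p}\in Z$. For the other composite, write $Z=Z_{(\mathcal{T},\mathcal{F})}$. The inclusion $\mathcal{T}\subseteq\mathcal{T}_Z$ is handled as follows: given $M\in\mathcal{T}$ and $\mathbf{p}\in\Supp(M)$, find $x\in M$ with $\mathbf{p}\in\Supp(\Gamma x)$, equivalently $\mathbf{p}\supseteq\ann_\Gamma(x)$; then $\Gamma x\cong\Gamma/\ann_\Gamma(x)$ belongs to $\mathcal{T}$ by hereditariness, and its further quotient $\Gamma/\mathbf{p}$ belongs to $\mathcal{T}$ as well, placing $\mathbf{p}$ in $Z$.

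The main obstacle is the reverse inclusion $\mathcal{T}_Z\subseteq\mathcal{T}$. The plan is: take $M\in\mathcal{T}_Z$, use Zorn's lemma to pick a submodule $N\leq M$ that is maximal with $N\in\mathcal{T}$ (this is legitimate because $\mathcal{T}$, being closed under coproducts and quotients, is closed under directed unions), and argue by contradiction. If $M/N\neq 0$, then by the proposition just proved together with the standard fact that nonzero Noetherian modules have associated primes, $\Ass(M/N)$ is nonempty; pick $\mathbf{q}\in\Ass(M/N)$, so $\mathbf{q}\in\Supp(M/N)\subseteq\Supp(M)\subseteq Z$, whence $\Gamma/\mathbf{q}\in\mathcal{T}$ by the definition of $Z$. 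The preimage in $M$ of an embedded copy $\Gamma/\mathbf{q}\hookrightarrow M/N$ is then an extension of $\Gamma/\mathbf{q}$ by $N$, which lies in $\mathcal{T}$ by extension-closure and strictly contains $N$, contradicting maximality. Hence $M=N\in\mathcal{T}$. The delicate points here are the existence of associated primes, where the Noetherian hypothesis on $\Gamma$ is essential, and the combined use of hereditariness and extension-closure to force the torsion-theoretic structure to be detected by support data alone.
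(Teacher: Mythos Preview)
The paper does not give its own proof of this theorem: it is stated as ``a crucial result from \cite{Ga}'' and is simply cited from Gabriel's foundational paper, with no argument supplied. Your proposal is therefore not competing against anything in the paper; it is a correct, self-contained proof of the cited result, and each of the four steps (a)--(d) is sound, including the Zorn-based argument for $\mathcal{T}_Z\subseteq\mathcal{T}$ using the existence of associated primes over a Noetherian ring.
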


For our purposes it is convenient to identify for a given module
$M$ the \emph{torsion submodule} $t_Z(M)$  with respect to the
torsion theory $(\mathcal T_Z,\mathcal T_Z^\perp )$.

\begin{proposition} \label{torsion_submodule}
Let $Z\subseteq \Spec \Gamma$ be a closed under specialization subset
and $M$ be a $\Gamma$-module. For an element $x\in M$, the following
assertions are equivalent:

\begin{enumerate}
\item $x$ belongs to $t_Z(M)$ \item $\Ass (\Gamma x)\subseteq Z$ (resp. $\Min \Ass (\Gamma x)\subseteq
Z$)
\item If $\mathbf{p}$ is a prime ideal such
that $\ann_\Gamma (x)\subseteq\mathbf{p}$, then $\mathbf{p}\in Z$
\item There are prime ideals $\mathbf{p}_1,\dots,\mathbf{p}_r\in Z$ (resp. $\mathbf{p}_1,\dots,\mathbf{p}_r\in \Min Z$) and  integers
$n_1,$ $\dots,$ $n_r>0$ such that $\mathbf{p}_1^{n_1}\cdot
\dots\cdot\mathbf{p}_r^{n_r}x=0$
\end{enumerate}
\end{proposition}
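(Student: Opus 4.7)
The plan is to prove $(1)\Leftrightarrow (3)$ essentially by unraveling the definition of $t_Z$ via Theorem~\ref{Gabriel_bijection}, then $(1)\Leftrightarrow (2)$ using closedness of $Z$ under specialization together with Proposition~\ref{minimal_in_Ass_and_Supp }, and finally $(3)\Leftrightarrow (4)$ via the Noetherian primary-decomposition fact that $\sqrt{I}^N\subseteq I$ for some $N$.

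First I would note that $\Gamma x$ is cyclic with annihilator $\ann_\Gamma (\Gamma x)=\ann_\Gamma (x)$, so $\Supp (\Gamma x)=V(\ann_\Gamma (x))$ is precisely the set of primes containing $\ann_\Gamma (x)$. By Theorem~\ref{Gabriel_bijection}, $x\in t_Z(M)$ iff $\Gamma x\in\mathcal T_Z$ iff $\Supp (\Gamma x)\subseteq Z$, and the right hand side is exactly statement (3). This yields $(1)\Leftrightarrow (3)$ essentially for free. For $(1)\Leftrightarrow (2)$ I would exploit that every prime in $\Supp (\Gamma x)$ contains a minimal one by Proposition~\ref{minimal_in_Ass_and_Supp }(1), so since $Z$ is closed under specialization, $\Supp (\Gamma x)\subseteq Z$ is equivalent to $\Min \Supp (\Gamma x)\subseteq Z$. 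Proposition~\ref{minimal_in_Ass_and_Supp }(2) identifies $\Min \Supp (\Gamma x)=\Min \Ass (\Gamma x)$, and the same specialization-closure argument applied to $\Ass (\Gamma x)$ (which is contained in $\Supp (\Gamma x)$ and whose elements each dominate a minimal element of $\Ass (\Gamma x)$) gives $\Min \Ass (\Gamma x)\subseteq Z\Leftrightarrow \Ass (\Gamma x)\subseteq Z$. Hence both versions of (2) are equivalent to (1).

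The equivalence $(3)\Leftrightarrow (4)$ is the technical heart. The direction $(4)\Rightarrow (3)$ is direct: if $\mathbf{p}_1^{n_1}\cdots\mathbf{p}_r^{n_r}x=0$, the product lies in $\ann_\Gamma (x)$, so any prime $\mathbf{p}\supseteq\ann_\Gamma (x)$ must contain some $\mathbf{p}_i\in Z$, and by specialization closure $\mathbf{p}\in Z$. Conversely, for $(3)\Rightarrow (4)$ I would use the Noetherianness of $\Gamma$ to enumerate the minimal primes over $\ann_\Gamma (x)$ as a finite list $\mathbf{p}_1,\ldots,\mathbf{p}_r$ (each in $Z$ by (3)), and invoke the standard fact that $\sqrt{\ann_\Gamma (x)}^N\subseteq\ann_\Gamma (x)$ for some $N>0$. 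Since $\sqrt{\ann_\Gamma (x)}=\mathbf{p}_1\cap\cdots\cap\mathbf{p}_r\supseteq\mathbf{p}_1\cdots\mathbf{p}_r$, this gives $\mathbf{p}_1^N\cdots\mathbf{p}_r^N\subseteq\ann_\Gamma (x)$, hence $\mathbf{p}_1^N\cdots\mathbf{p}_r^Nx=0$. These $\mathbf{p}_i$ coincide with $\Min \Ass (\Gamma x)$ by the cyclic-module identification $\Supp (\Gamma x)=V(\ann_\Gamma (x))$ combined with Proposition~\ref{minimal_in_Ass_and_Supp }(2), which is what should be taken care of by the parenthetical refinement in (4).

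The main point requiring care is the bookkeeping between the several sets $\Supp (\Gamma x)$, $\Ass (\Gamma x)$, their minimal elements, and the minimal primes over $\ann_\Gamma (x)$, and in particular reading the ``resp.'' clause of (4) so that it lines up with the ``resp.'' clause of (2): the $\mathbf{p}_i$ produced by the argument are minimal in $\Ass (\Gamma x)$ (equivalently in $\Supp (\Gamma x)$), and all are in $Z$, so they serve simultaneously as witnesses for the refined versions of (2) and (4) provided these are interpreted as parallel, mutually equivalent conditions rather than one being strictly stronger than the other.
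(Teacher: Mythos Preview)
Your argument follows the same route as the paper's: $(1)\Leftrightarrow(3)$ via $\Supp(\Gamma x)=V(\ann_\Gamma(x))$ and hereditariness of $\mathcal T_Z$, $(1)\Leftrightarrow(2)$ via Proposition~\ref{minimal_in_Ass_and_Supp } and specialization-closure, and $(3)\Leftrightarrow(4)$ via the radical of $\ann_\Gamma(x)$. All of this is correct and matches the paper.

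There is one genuine slip, however, in your treatment of the parenthetical refinement in (4). You read the ``resp.'' clause of (4) as parallel to that of (2), i.e.\ as asking for $\mathbf p_i\in\Min\Ass(\Gamma x)$; but the statement actually asks for $\mathbf p_i\in\Min Z$, the minimal elements of the set $Z$ itself. The minimal primes over $\ann_\Gamma(x)$ that your construction produces lie in $Z$ but need not be minimal \emph{in} $Z$. The paper closes this by one more step: for each $\mathbf p_i$ choose, using Proposition~\ref{minimal_in_Ass_and_Supp }(1), some $\mathbf q_i\in\Min Z$ with $\mathbf q_i\subseteq\mathbf p_i$; then $\mathbf q_1^{N}\cdots\mathbf q_r^{N}\subseteq\mathbf p_1^{N}\cdots\mathbf p_r^{N}\subseteq\ann_\Gamma(x)$, which gives the refined form of (4). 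With that adjustment your proof is complete and coincides with the paper's.
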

\begin{proof}
$1)\Longleftrightarrow 2)\Longleftrightarrow 3)$ Due to the fact
that $\mathcal T_Z$ is closed under taking submodules, assertion 1)
is equivalent to say that $\Gamma x\in\mathcal T_Z$, i.e., to say
that $\Supp (\Gamma x)\subseteq Z$.  But $\Supp (\Gamma x)$ is precisely
the set of prime ideals containing $\ann_\Gamma (x)$ (cf. Proposition
III.4.6 in \cite{Ku}). Moreover,  being $Z$ closed under
specialization, Proposition \ref{minimal_in_Ass_and_Supp } implies
that $\Supp (\Gamma x)\subseteq Z$ holds exactly when $(\Min )\Ass (\Gamma
x)\subseteq Z$.

$3)\Longrightarrow 4)$ Let $\{\mathbf{p}_1,\dots,\mathbf{p}_r\}$ be
the (finite) set of  prime ideals of $\Gamma$ which are minimal
among those containing $\ann_\Gamma (x)$. In particular, they belong
to $Z$. Then we have $\mathbf{p}_1\cdot \ldots
\cdot\mathbf{p}_r\subseteq\mathbf{p}_1\cap\ldots\cap
\mathbf{p}_r=\sqrt{\ann_\Gamma (x)}$, where $\sqrt{I}$ denotes the
radical of $I$, for every ideal $I$ of $\Gamma$.  It follows the
existence of a positive integer $n>0$ such that
$\mathbf{p}_1^n\cdot\ldots
\cdot\mathbf{p}_r^n=(\mathbf{p}_1\cdot\ldots
\cdot\mathbf{p}_r)^n\subseteq \ann_\Gamma (x)$. By Proposition
\ref{minimal_in_Ass_and_Supp }(1), replacing each $\mathbf{p}_i$ by
minimal element of $Z$ contained in it if necessary, we can find the
needed $\mathbf{p}_i$ in $\Min Z$.

$4)\Longrightarrow 3)$ Let $\mathbf{p}_1,\dots,\mathbf{p}_r\in Z$
and $n_1,\dots,n_r>0$ be as in condition 4). Then we have
$\mathbf{p}_1^{n_1}\cdot\ldots \cdot\mathbf{p}_r^{n_r}\subseteq
\ann_\Gamma (x)$. If $\mathbf{p}$ is a prime ideal such that
$\ann_\Gamma (x)\subseteq\mathbf{p}$ then there is some $j=1,\dots,r$
such that $\mathbf{p}_j\subseteq\mathbf{p}$. It follows that
$\mathbf{p}\in Z$ since $Z$ is closed under specialization.
\end{proof}

The following example of closed under specialization subsets of
$\Spec \Gamma$ will be the most interesting for us.

\begin{example}
One defines a transfinite ascending chain of subsets $(Z_i
)_{i\text{ ordinal}}$  as follows. We put $Z_0=\Specm \Gamma$. If $i
>0$ is any ordinal and $Z_j$ has been defined for all $j
<i$, then $Z_i =\bigcup_{j <i}Z_j$, in case $i$ is a limit ordinal,
and $Z_i =Z_{i -1}\cup \Max (\Spec\Gamma\setminus Z_{i -1})$ in case
$i$ is nonlimit. It is not difficult to see that there is a minimal
ordinal $\delta$ such that $\Spec\Gamma =Z_\delta$ and that all
$Z_i$ are closed under specialization. In particular, for each
$\mathbf{p}\in \Spec\Gamma$, there is a minimal ordinal
$i_\mathbf{p}$ such that $\mathbf{p}\in Z_{i_\mathbf{p}}$. This
ordinal is nonlimit and we put $\cht(\mathbf{p})=i_\mathbf{p}$ and
call it the \emph{coheight} of $\mathbf{p}$.
\end{example}

Using Theorem \ref{Gabriel_bijection}, we get a corresponding
transfinite ascending chain of torsion classes $\mathcal
T_0\subseteq\mathcal T_1\subseteq
\ldots\subseteq\mathcal{T}_i\subseteq\ldots$ such that $\Gamma -\Mod
=\mathcal{T}_\delta =\bigcup_{i\leq\delta}\mathcal{T}_i$. Then, for
every $\Gamma$-module $M$, there is uniquely determined (not
necessarily nonlimit) ordinal $i$ such that $M\in\mathcal T_i$ and
$M\not\in\mathcal T_{j}$, for all $j <i$. We also have $t_i
(M)\subseteq t_{j}(M)$, for all $i\leq j$, where $t_i$ denotes the
torsion radical associated to $\mathcal{T}_{i}$.

\begin{corollary} \label{T_i-torsion T_i-1-torsionfree}
Let $\Gamma$ be a commutative Noetherian ring,  $M$ be a nonzero
$\Gamma$-module and $i$ be a nonlimit ordinal. The following
assertions are equivalent:

\begin{enumerate}
\item $t_i (M)=M$ but $t_{i -1}(M)=0$  \item The next two conditions
hold:

\begin{enumerate}
\item For every $x\in M$ there are prime ideals $\mathbf{p}_1,\dots,\mathbf{p}_r$
of coheight exactly $i$ and positive integers $n_1,\dots,n_r>0$ such
that $\mathbf{p}_1^{n_1}\cdot\ldots\cdot\mathbf{p}_r^{n_r}x=0$
\item If $\mathbf{p}$ is a prime ideal of coheight $<i$ and $x\in M$
is an element such that $\mathbf{p} x=0$, then $x=0$.
\end{enumerate}
\item The prime ideals in $\Ass (M)$ have coheight exactly $i$.
\end{enumerate}
\end{corollary}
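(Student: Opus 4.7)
The plan is to prove $(1)\Leftrightarrow(3)$ and $(3)\Leftrightarrow(2)$, using Proposition \ref{torsion_submodule}, Theorem \ref{Gabriel_bijection}, the standard inclusion $\Ass(N)\subseteq\Ass(M)$ for submodules $N\subseteq M$, and the observation from the example immediately preceding the corollary that each $Z_i$ is closed under specialization (so $\mathbf{p}\subseteq\mathbf{q}$ forces $\cht(\mathbf{q})\leq\cht(\mathbf{p})$).

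For $(1)\Leftrightarrow(3)$, since $\mathcal{T}_i=\mathcal{T}_{Z_i}$ by construction, Proposition \ref{torsion_submodule} gives $t_i(M)=M$ iff $\Ass(M)\subseteq Z_i$. Assuming $(3)$ we have $\Ass(M)\subseteq Z_i$, so $t_i(M)=M$; and if $0\neq x\in t_{i-1}(M)$, Noetherianity of $\Gamma$ produces a nonempty $\Ass(\Gamma x)\subseteq Z_{i-1}\cap\Ass(M)$, yielding a prime in $\Ass(M)$ of coheight $\leq i-1<i$, contradicting $(3)$. Conversely, assuming $(1)$, a $\mathbf{p}\in\Ass(M)$ with $\cht(\mathbf{p})<i$ would equal $\ann_\Gamma(x)$ for some $x$, whence $\Gamma x\cong\Gamma/\mathbf{p}\in\mathcal{T}_{i-1}$ by Theorem \ref{Gabriel_bijection} (as $\mathbf{p}\in Z_{i-1}$), contradicting $t_{i-1}(M)=0$.

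For $(3)\Rightarrow(2)$: given $x\in M$, the minimal primes $\mathbf{p}_1,\ldots,\mathbf{p}_r$ over $\ann_\Gamma(x)$ coincide with $\Min\Ass(\Gamma x)\subseteq\Ass(M)$ by Proposition \ref{minimal_in_Ass_and_Supp }, so $(3)$ forces each $\mathbf{p}_j$ to have coheight exactly $i$. The identity $\sqrt{\ann_\Gamma(x)}=\bigcap_j\mathbf{p}_j$, as in the proof of Proposition \ref{torsion_submodule}$(3)\Rightarrow(4)$, then yields $\mathbf{p}_1^{n_1}\cdots\mathbf{p}_r^{n_r}x=0$ for suitable $n_j>0$, giving $(2a)$. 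For $(2b)$, if $\mathbf{p}x=0$ with $\cht(\mathbf{p})<i$, then $\Supp(\Gamma x)\subseteq V(\mathbf{p})\subseteq Z_{i-1}$ by specialization closure, forcing $\Ass(\Gamma x)\subseteq\Ass(M)$ to be empty; hence $\Gamma x=0$ and $x=0$. For $(2)\Rightarrow(3)$, let $\mathbf{p}=\ann_\Gamma(x)\in\Ass(M)$: $(2a)$ supplies primes $\mathbf{p}_j$ of coheight $i$ with $\prod_j\mathbf{p}_j^{n_j}\subseteq\mathbf{p}$, so primeness of $\mathbf{p}$ puts some $\mathbf{p}_j\subseteq\mathbf{p}$ and specialization closure gives $\cht(\mathbf{p})\leq i$; meanwhile $\cht(\mathbf{p})<i$ combined with $\mathbf{p}x=0$ and $x\neq 0$ would violate $(2b)$, yielding $\cht(\mathbf{p})=i$.

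The main technical subtlety I anticipate is in $(3)\Rightarrow(2a)$: condition $(4)$ of Proposition \ref{torsion_submodule} only a priori supplies primes in $\Min Z_i$, some of which may have coheight strictly less than $i$. The remedy is to rerun that proposition's argument starting from $\Min\Ass(\Gamma x)$ itself, exploiting that its elements lie in $\Ass(M)$ and are thereby forced by $(3)$ to have coheight exactly $i$.
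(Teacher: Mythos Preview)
Your proof is correct and follows essentially the same route as the paper. The paper organizes the implications as $(1)\Leftrightarrow(3)$, $(2)\Rightarrow(1)$, and $(1),(3)\Rightarrow(2)$, whereas you do $(1)\Leftrightarrow(3)$ and $(3)\Leftrightarrow(2)$; these are minor rearrangements of the same ideas. One small point worth noting: your argument for $(3)\Rightarrow(2a)$, taking the $\mathbf{p}_j$ directly as the minimal primes over $\ann_\Gamma(x)$ and observing $\Min\Ass(\Gamma x)\subseteq\Ass(M)$, is slightly more direct than the paper's, which first invokes Proposition~\ref{torsion_submodule}(4) to get primes in $Z_i$, then uses a minimality-of-exponents trick to embed each into some $\mathbf{q}_k\in\Ass(M)$ and thereby force coheight exactly $i$. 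Your anticipated subtlety is real and you resolve it cleanly.
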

\begin{proof}
$1)\Longleftrightarrow 3)$ By Proposition \ref{torsion_submodule}
and the fact that the mentioned torsion theories are hereditary, we
have that $t_i (M)=M$ iff $\Ass (M)\subseteq Z_i$ and $t_{i -1}(M)=0$
iff $\Ass (M)\cap Z_{i -1}=\emptyset$. Therefore assertion 1 holds if
and only if $\Ass (M)\subseteq Z_i\setminus Z_{i -1}$, which is
equivalent to assertion 3.

$2)\Longrightarrow 1)$ From Proposition \ref{torsion_submodule} and
condition 2.a we get that $t_i (M)=M$. On the other hand, if we had
$0\neq x\in t_{i -1}(M)$ that same proposition would give that
$\emptyset\neq \Ass (\Gamma x)\subseteq Z_{i -1}$. We then get
$g\in\Gamma$ such that $gx\neq 0$ and $\ann_\Gamma (gx)=\mathbf{p}$
is a prime ideal in $Z_{i -1}$. That would contradict condition
2.b).

$1),3)\Longrightarrow 2)$ Let's prove condition 2.b by way of
contradiction. Suppose  that there are $0\neq x\in M$ and
$\mathbf{p}\in Z_{i -1}$ such that $\mathbf{p}x=0$. Taking a maximal
element in the set $\{\ann_\Gamma (gx):$ $g\in G\text{ and }gx\neq
0\}$, we obtain a $\mathbf{q}\in \Ass (\Gamma x)\subseteq \Ass (M)$ (cf.
\cite{mat}[Teorem 6.1]) such that $\mathbf{p}\subseteq\mathbf{q}$.
Since $Z_{i -1}$ is closed under specialization we get that
$\mathbf{q}\in Z_{i -1}$, against assertion 3).

We next prove condition 2.a. Let us take $0\neq x\in M$. Then, by
Proposition \ref{torsion_submodule}, we have prime ideals
$\mathbf{p}_1,\ldots, \mathbf{p}_r\in Z_i$ (hence of coheight
$\leq i$) and positive integers $n_1, \ldots, n_r>0$ such that
$\mathbf{p}_1^{n_1}\cdot \ldots\cdot\mathbf{p}_r^{n_r}x=0$. It is
not restrictive to choose the $\mathbf{p}_i$ and  the $n_i$ in
such a way that the latter ones are minimal, i.e., that
$\mathbf{p}_1^{n_1}\cdot \ldots\cdot\mathbf{p}_k^{n_k-1}\cdot
\ldots\cdot\mathbf{p}_r^{n_r}x\neq 0$ for all $k=1,\ldots, r$.
That immediately implies the existence of elements $g_k\in\Gamma$
such that $g_kx\neq 0$ and $\mathbf{p}_k\subseteq \ann_\Gamma
(g_kx)$, for all $k=1,\ldots,r$. By \cite{mat}[Theorem 6.1], we
find $\mathbf{q}_k\in \Ass (\Gamma x)\subseteq \Ass (M)$ such that
$\mathbf{p}_k\subseteq\mathbf{q}_k$, for all $k=1,\ldots, r$. But
then, by assertion 3), we have $i =\cht(\mathbf{q}_k)\leq
\cht(\mathbf{p}_k)\leq i$ for $k=1,\ldots, r$. Therefore we have
$\cht(\mathbf{p}_k)=i$, for $k=1,\ldots, n$.
\end{proof}

Our next goal is to give the precise structure of the
$\Gamma$-modules in $\mathcal{T}_0$, which is actually given by a
more general result, Proposition \ref{coprimes_in_Ass} below, which
will follow from the following  strengthened version of the chinese
reminder's theorem:

\begin{lemma} \label{chinese_reminder}
Let $I_1,\ldots, I_r$ ($r>1$) be pairwise distinct ideals of
$\Gamma$. The following assertions are equivalent:

\begin{enumerate}
\item $I_i$ and $I_j$ are coprime, for all $i\neq j$ \item The
canonical ring homomorphism $\Gamma\longrightarrow\prod_{1\leq i\leq
r}\Gamma/I_i$ is surjective.
\end{enumerate}
In such case $\bigcap_{1\leq i\leq j}I_i=I_1\cdot \ldots\cdot
I_r$.
\end{lemma}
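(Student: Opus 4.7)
The plan is to reduce the statement to the classical Chinese Remainder Theorem; nothing in the claim actually uses the standing Noetherian hypothesis on $\Gamma$.

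For the implication $(2)\Rightarrow(1)$, I would fix $i\neq j$ and use the surjectivity of $\Gamma\longrightarrow\prod_k \Gamma/I_k$ to lift the tuple whose $i$-th coordinate is $1+I_i$ and whose other coordinates are $0$ to an element $x\in\Gamma$. Then $x\in I_j$ while $1-x\in I_i$, so $1=(1-x)+x\in I_i+I_j$, proving coprimeness.

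For $(1)\Rightarrow(2)$, I would induct on $r$. The base case $r=2$ is standard: writing $1=a+b$ with $a\in I_1$, $b\in I_2$, the element $x:=x_1b+x_2a$ reduces to $x_i\pmod{I_i}$ for any prescribed $x_1,x_2\in\Gamma$. The crucial step for the inductive step is the propagation lemma: if $I_1+I_j=\Gamma$ for all $j=2,\ldots,r$, then $I_1+I_2\cdots I_r=\Gamma$. This is proved by choosing $a_j\in I_1$, $b_j\in I_j$ with $a_j+b_j=1$ and expanding the product $1=\prod_{j=2}^r(a_j+b_j)$: every summand except $b_2\cdots b_r$ contains some $a_j$ and hence lies in $I_1$. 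Applying the base case to the coprime pair $I_1$ and $I_2\cap\cdots\cap I_r$ (which equals $I_2\cdots I_r$ by the inductive hypothesis of the last assertion), together with the inductive hypothesis of surjectivity for $I_2,\ldots,I_r$, allows one to solve an arbitrary system of congruences modulo the $I_i$.

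For the closing equality $\bigcap_i I_i=I_1\cdots I_r$, the inclusion $\supseteq$ is automatic. For $\subseteq$ I would again induct: when $r=2$ and $x\in I_1\cap I_2$, the identity $x=xa+xb$ with $1=a+b$ as above shows $x\in I_1I_2$; for larger $r$ one uses the coprimality of $I_1$ and $I_2\cdots I_r$ established in the previous paragraph to reduce to the two-ideal case. I do not anticipate a serious obstacle—this is textbook material—and the only mildly delicate point, shared by both inductions, is the coprimality-propagation step in the second paragraph.
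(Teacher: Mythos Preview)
Your proof is correct and is essentially the standard argument. The paper does not actually supply its own proof of this lemma: it simply cites Atiyah--Macdonald, Proposition~1.10, where exactly the argument you have sketched (coprimality propagation via expanding $\prod_j(a_j+b_j)$, the two-ideal base case, and the inductive identification $\bigcap I_i=\prod I_i$) is carried out. So your write-up is just an unpacking of that reference, and there is nothing to compare.

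Two very minor remarks. First, in the inductive step for $(1)\Rightarrow(2)$ you do not need the parenthetical ``which equals $I_2\cdots I_r$ by the inductive hypothesis of the last assertion'': since $I_2\cdots I_r\subseteq I_2\cap\cdots\cap I_r$, the coprimality of $I_1$ with the product already forces coprimality with the intersection, so the two inductions can be decoupled rather than run simultaneously. Second, the paper's citation points only to part~(i) of that proposition (the intersection--product equality); the equivalence $(1)\Leftrightarrow(2)$ is part~(ii) there, so your explicit argument for $(2)\Rightarrow(1)$ is a welcome addition.
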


\begin{proof}
See \cite{am}, Proposition 1.10, i).
\end{proof}

In the rest of the paper, if $\mathbf{p}\in \Spec\Gamma$ and $M$ is
a $\Gamma$-module, we shall denote by $M(\mathbf{p})$ the submodule
consisting of those $x\in M$ such that $\mathbf{p}^nx=0$, for some
$n\geq 0$. Note that, in such case, if $\mathbf{p}\in
Ass(M(\mathbf{p}))$ then $MinAss(M(\mathbf{p}))=\{p\}$.

\begin{proposition} \label{coprimes_in_Ass}
Let $M$ be a $\Gamma$ module such that $\Min \Ass (M)$ consists of
pairwise coprime ideals (e.g. if $\Ass (M)\subseteq \Specm \Gamma$).
Then $\Min \Ass (M)=\Ass (M)$ and $M=\oplus_{\mathbf{p}\in
\Ass (M)}M(\mathbf{p})$.
\end{proposition}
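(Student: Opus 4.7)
Set $\{\mathbf{p}_\alpha\}_{\alpha\in I}:=\Min \Ass (M)$. The plan is to apply the Chinese Remainder Theorem (Lemma~\ref{chinese_reminder}) to pairwise coprime powers of the $\mathbf{p}_\alpha$ in order to decompose any element of $M$ into $M(\mathbf{p}_\alpha)$-components, and then to use that same coprimality to force every associated prime of $M$ to be minimal.

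For directness of $\sum_\alpha M(\mathbf{p}_\alpha)$, a relation $x_{\alpha_1}+\cdots+x_{\alpha_k}=0$ with $x_{\alpha_i}\in M(\mathbf{p}_{\alpha_i})$ yields integers $n_i$ with $\mathbf{p}_{\alpha_i}^{n_i}x_{\alpha_i}=0$. Since powers of pairwise coprime ideals remain pairwise coprime, Lemma~\ref{chinese_reminder} produces $e_i\in\Gamma$ with $e_i\equiv\delta_{ij}\pmod{\mathbf{p}_{\alpha_j}^{n_j}}$; applying $e_i$ to the relation isolates $x_{\alpha_i}=0$.

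For surjectivity, I would fix $x\in M$ and let $\mathbf{q}_1,\ldots,\mathbf{q}_s$ be the minimal primes over $\ann_\Gamma (x)$, which by Proposition~\ref{minimal_in_Ass_and_Supp} coincide with $\Min \Ass (\Gamma x)\subseteq \Ass (M)$ and, by Proposition~\ref{torsion_submodule}(4), satisfy $\prod_j\mathbf{q}_j^m\cdot x=0$ for some $m$. Each $\mathbf{q}_j$ must contain a \emph{unique} $\mathbf{p}_{\alpha_j}\in\Min \Ass (M)$: otherwise $\mathbf{q}_j$ would contain the (coprime) sum of two distinct minimal primes, namely $\Gamma$. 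Grouping the $\mathbf{q}_j$ by $\alpha_j$ and setting $J_\alpha:=\prod_{j:\alpha_j=\alpha}\mathbf{q}_j^m$, the inclusion $\mathbf{p}_\alpha\subseteq\sqrt{J_\alpha}$ combined with Noetherianity gives $\mathbf{p}_\alpha^L\subseteq J_\alpha$ for some $L$, so the $J_\alpha$ inherit pairwise coprimality from the $\mathbf{p}_\alpha$. A second application of CRT then produces $e_\alpha\in\Gamma$ with $e_\alpha\equiv\delta_{\alpha\beta}\pmod{J_\beta}$, and the components $x_\alpha:=e_\alpha x$ satisfy $\mathbf{p}_\alpha^L x_\alpha\subseteq J_\alpha e_\alpha x\subseteq\prod_\beta J_\beta\cdot x=0$ (hence $x_\alpha\in M(\mathbf{p}_\alpha)$), while $1-\sum_\alpha e_\alpha\in\bigcap_\beta J_\beta=\prod_\beta J_\beta\subseteq\ann_\Gamma (x)$ by Lemma~\ref{chinese_reminder} (hence $\sum_\alpha x_\alpha=x$).

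For the equality $\Ass (M)=\Min \Ass (M)$, I would pick $\mathbf{q}=\ann_\Gamma (y)\in \Ass (M)$, decompose $y=\sum y_\alpha$ via the previous step, and note that coprimality forces $\mathbf{q}$ to contain a unique $\mathbf{p}_{\alpha_0}$; for $\alpha\neq\alpha_0$ both $\mathbf{q}$ and a power $\mathbf{p}_\alpha^L$ annihilate $y_\alpha$ while their sum contains $\mathbf{p}_{\alpha_0}+\mathbf{p}_\alpha=\Gamma$, so $y_\alpha=0$; hence $y\in M(\mathbf{p}_{\alpha_0})$ and $\mathbf{p}_{\alpha_0}\subseteq\mathbf{q}$. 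The main obstacle will be upgrading this inclusion to equality — equivalently, ruling out embedded primes of $M$ lying above the $\mathbf{p}_\alpha$. This appears to require either strengthening the hypothesis to pairwise coprimality of all of $\Ass (M)$ (which is automatic in the intended applied case $\Ass (M)\subseteq \Specm \Gamma$) or invoking an extra property of the summands $M(\mathbf{p}_\alpha)$, most naturally the remark immediately preceding the proposition that $\Min \Ass (M(\mathbf{p}_{\alpha_0}))=\{\mathbf{p}_{\alpha_0}\}$ whenever $\mathbf{p}_{\alpha_0}\in \Ass (M(\mathbf{p}_{\alpha_0}))$.
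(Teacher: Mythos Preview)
Your argument for the direct-sum decomposition is correct and runs parallel to the paper's: both reduce surjectivity to the Chinese Remainder Theorem applied to pairwise coprime prime-powers annihilating a given element. The paper extracts those primes directly from $\Min\Supp(M)=\Min\Ass(M)$ via Proposition~\ref{torsion_submodule}, whereas you pass through $\Min\Ass(\Gamma x)$ and then group by the underlying $\mathbf{p}_\alpha$; the extra bookkeeping is harmless. For directness your idempotent argument is in fact cleaner than the paper's, which argues via $\Ass(\Gamma x)\subseteq\Ass(M(\mathbf{p}))\cap\Ass(\bigoplus_{\mathbf{q}\neq\mathbf{p}}M(\mathbf{q}))$ and already leans on the problematic claim below.

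Your hesitation about upgrading $\mathbf{p}_{\alpha_0}\subseteq\mathbf{q}$ to equality is entirely justified, and the remedy you propose does not work: the remark before the proposition gives only $\Min\Ass(M(\mathbf{p}_{\alpha_0}))=\{\mathbf{p}_{\alpha_0}\}$, not $\Ass(M(\mathbf{p}_{\alpha_0}))=\{\mathbf{p}_{\alpha_0}\}$, and it is the latter that is needed. In fact the assertion $\Min\Ass(M)=\Ass(M)$ is false under the stated hypothesis. Take $\Gamma=K[x,y]$ and $M=\Gamma/(x^2,xy)$: then $\ann_\Gamma(\bar y)=(x)$ and $\ann_\Gamma(\bar x)=(x,y)$, so $\Ass(M)=\{(x),(x,y)\}$, while $\Min\Ass(M)=\{(x)\}$ is vacuously ``pairwise coprime''. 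Here $M((x))=M$ (since $(x)^2\subseteq(x^2,xy)$) and $\bar x\in M((x,y))$, so neither conclusion of the proposition holds. The paper's proof hits the same wall: its first line asserts $\Ass(M(\mathbf{p}))=\{\mathbf{p}\}$ without justification, and the counterexample shows none can be given.

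Everything is salvaged under the stronger hypothesis that $\Ass(M)$ itself consists of pairwise coprime ideals---which forces $\Ass(M)$ to be an antichain, hence equal to $\Min\Ass(M)$, and your decomposition argument then goes through verbatim. This stronger hypothesis is exactly what holds in the case $\Ass(M)\subseteq\Specm\Gamma$ and in every subsequent application in the paper, so the damage is confined to the generality of the statement.
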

\begin{proof}
We shall prove that $M=\oplus_{\mathbf{p}\in
\Min \Ass (M)}M(\mathbf{p})$. It will follow that
$\Ass (M)=\bigcup_{\mathbf{p}\in
\Min \Ass (M)}\Ass (M(\mathbf{p}))=\bigcup_{\mathbf{p}\in
\Min \Ass (M)}\{\mathbf{p}\}=\Min \Ass (M)$ and the result will follow.

Let us fix $\mathbf{p}\in \Min \Ass (M)$ and take $$x\in
M(\mathbf{p})\cap(\oplus_{\mathbf{q}\in \Min \Ass
(M),\mathbf{q}\neq\mathbf{p}}M(\mathbf{q})).$$ Then we have
inclusions

\begin{center}
$\Ass (\Gamma x)\subseteq \Ass (M(\mathbf{p}))\cap
\Ass (\oplus_{\mathbf{q}\in
\Min \Ass (M),\mathbf{q}\neq\mathbf{p}}M(\mathbf{q}))\subseteq\{\mathbf{p}\}\cap
(\Min \Ass (M)\setminus\{\mathbf{p}\})=\emptyset$.
\end{center}
It follows that $x=0$ and, hence, the sum of the $M(\mathbf{q})$,
with $\mathbf{q}\in \Min \Ass (M)$, is direct.

Let us consider now $Z:=\Supp (M)$, which is a subset of
$\Spec\Gamma$ closed under specialization. Then, by Theorem
\ref{Gabriel_bijection}, $M$ belongs to $\mathcal{T}_Z$ and hence
$t_Z(M)=M$. If now $x\in M$ then Proposition
\ref{torsion_submodule} guarantees the existence of distinct prime
ideals $\mathbf{p}_1,\ldots, \mathbf{p}_r\in \Min \Supp (M)$ and
positive integer $n_1,\ldots, n_r>0$ such that
$\mathbf{p}_1^{n_1}\cdot \ldots\cdot\mathbf{p}_r^{n_r}x=0$. The
$\mathbf{p}_i$ are pairwise coprime since $\Min \Supp (M)=\Min
\Ass (M)$ (see Proposition \ref{minimal_in_Ass_and_Supp }). But
then it follows easily that the ideals $\mathbf{p}_i^{n_i}$ are
also pairwise coprime. Then $\Gamma x$ is a module over the factor
ring $\Gamma/\mathbf{p}_1^{n_1}\cdot \ldots\mathbf{p}_r^{n_r}$.
But, by Lemma \ref{chinese_reminder}, we know that
$\mathbf{p}_1^{n_1}\cdot
\ldots\cdot\mathbf{p}_r^{n_r}=\bigcap_{1\leq i\leq
r}\mathbf{p}_i^{n_i}$, and then the canonical map

\begin{center}
$\Gamma/\mathbf{p}_1^{n_1}\cdot
\ldots\cdot\mathbf{p}_r^{n_r}\longrightarrow\prod_{1\leq i\leq
r}\Gamma/\mathbf{p}_i^{n_i}$
\end{center}
is a ring isomorphism. It follows that in the ring
$\Gamma/\mathbf{p}_1^{n_1}\cdot \ldots\cdot\mathbf{p}_r^{n_r}$ we
can decompose $\bar{1}=\bar{g}_1+\ldots+\bar{g}_r$, where
$g_i\in\mathbf{p}_1^{n_1}\cdot
\ldots\mathbf{p}_{i-1}^{n_{i-1}}\cdot\mathbf{p}_{i+1}^{n_{i+1}}\ldots\cdot\mathbf{p}_r^{n_r}$.
Then $x=\sum_{1\leq i\leq r}g_ix$ and $\mathbf{p_i}^{n_i}g_ix=0$,
for $i=1,\ldots, r$. It follows that $x\in\oplus_{\mathbf{p}\in
\Min \Ass (M)}M(\mathbf{p})$, and we get the desired equality
$M=\oplus_{\mathbf{p}\in \Min \Ass (M)}M(\mathbf{p})$.
\end{proof}

\begin{proposition} \label{coprime_assassins}
Let $M$ and $N$ be $\Gamma$-modules such that $\mathbf{p}$ and
$\mathbf{q}$ are coprime whenever $\mathbf{p}\in \Ass (M)$ and
$\mathbf{q}\in \Ass (N)$ (resp. $\mathbf{p}\in \Min \Ass (M)$ and
$\mathbf{q}\in \Min \Ass (N)$). The equality

\begin{center}
$\Ext_\Gamma^i(M,N)=0=\Ext_\Gamma^i(N,M)$
\end{center}
holds for all $i\geq 0$.
\end{proposition}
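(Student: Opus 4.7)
First I would establish the $\Hom$ vanishing. Suppose $f\colon M\to N$ is nonzero and set $L=\Im f$. Then $L$ is simultaneously a quotient of $M$ and a submodule of $N$, so by exactness of localization $\Supp (L)\subseteq \Supp (M)\cap \Supp (N)$. Since $L\neq 0$, Proposition \ref{minimal_in_Ass_and_Supp }(2) produces a prime $\mathbf{p}\in \Min \Ass (L)=\Min \Supp (L)$, and Proposition \ref{minimal_in_Ass_and_Supp }(1) supplies $\mathbf{p}'\in \Min \Ass (M)$ and $\mathbf{q}'\in \Min \Ass (N)$ with $\mathbf{p}',\mathbf{q}'\subseteq \mathbf{p}$. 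The hypothesis $\mathbf{p}'+\mathbf{q}'=\Gamma$ then forces $\mathbf{p}=\Gamma$, contradicting its primeness; hence $\Hom_\Gamma(M,N)=0$ and, by symmetry, $\Hom_\Gamma(N,M)=0$. Note that this simultaneously covers both versions of the hypothesis, since coprimeness on $\Min \Ass$ is equivalent to coprimeness on all of $\Ass$.

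For $i\geq 1$, I would compute $\Ext_\Gamma^i(M,N)$ from an injective coresolution of $N$ built recursively: put $N_0=N$, $E_j=E(N_j)$ (the injective envelope) and $N_{j+1}=E_j/N_j$. Then $\Ext_\Gamma^i(M,N)=H^i(\Hom_\Gamma(M,E_\bullet))$, so it suffices to show $\Hom_\Gamma(M,E_j)=0$ for every $j\geq 0$. Applying the argument of the first paragraph with $E_j$ in place of $N$, this reduces to verifying that every prime in $\Min \Ass (E_j)$ is coprime with every prime in $\Min \Ass (M)$.

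The key inductive claim is that every prime in $\Min \Ass (N_j)$ contains some prime in $\Min \Ass (N)$. For $j=0$ this is trivial. In the inductive step, since $N_j\hookrightarrow E_j$ is essential, one has $\Ass (E_j)=\Ass (N_j)$; combined with Proposition \ref{minimal_in_Ass_and_Supp }(2) this gives $\Min \Supp (E_j)=\Min \Ass (N_j)$, and since $\Supp (E_j)$ is closed under specialization, $\Supp (E_j)=\bigcup_{\mathbf{q}\in \Min \Ass (N_j)}V(\mathbf{q})$. Because $N_{j+1}$ is a quotient of $E_j$, $\Supp (N_{j+1})\subseteq \Supp (E_j)$, and hence any prime in $\Min \Ass (N_{j+1})=\Min \Supp (N_{j+1})$ contains a prime in $\Min \Ass (N_j)$, which by induction contains a prime in $\Min \Ass (N)$. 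The coprimeness of $\Min \Ass (M)$ with $\Min \Ass (N)$ therefore lifts to coprimeness of $\Min \Ass (M)$ with $\Min \Ass (E_j)=\Min \Ass (N_j)$, giving $\Hom_\Gamma(M,E_j)=0$ for every $j$ and so $\Ext_\Gamma^i(M,N)=0$; the vanishing of $\Ext_\Gamma^i(N,M)$ follows by swapping $M$ and $N$. The principal obstacle is tracking how associated primes evolve under iterated injective envelopes, which is precisely what the essentiality identity $\Ass (E(L))=\Ass (L)$ and the equality $\Min \Supp =\Min \Ass $ from Proposition \ref{minimal_in_Ass_and_Supp } resolve.
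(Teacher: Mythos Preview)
Your argument is correct, and the overall plan---take an injective resolution of one of the modules and kill each $\Hom$ term by a support/assassin incompatibility---is the same as the paper's. The difference lies in how the primes attached to the terms of the resolution are controlled. The paper takes the \emph{minimal} injective resolution $0\to M\to I^0\to I^1\to\cdots$ and invokes \cite{mat}[Theorem 18.7]: every indecomposable summand of any $I^i$ is of the form $E(\Gamma/\mathbf{p})$ with $\mathbf{p}\in\Supp(M)$; since such $\mathbf{p}$ is coprime with every element of $\Supp(N)$ (which follows from the hypothesis via $\Min\Ass=\Min\Supp$), one gets $\Hom_\Gamma(N,I^i)=0$ at once. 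You instead track the cosyzygies $N_j$ by hand, using only the essentiality identity $\Ass(E(L))=\Ass(L)$ and the closure of support under specialization to show inductively that every prime in $\Min\Ass(N_j)$ sits over a prime in $\Min\Ass(N)$. Your route is more elementary and self-contained---it avoids the Matlis-type structure result on minimal injective resolutions---at the cost of a longer inductive bookkeeping step; the paper's route is a one-line appeal to a nontrivial black box. Either way the mechanism is the same: the primes occurring along the resolution stay inside $\Supp$ of the resolved module, which the coprimeness hypothesis then separates from the other module.
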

\begin{proof}
Since we have $\Min \Ass (M)=\Min \Supp (M)$ and similarly for $N$
it follows that $\mathbf{p}$ and $\mathbf{q}$ are coprime whenever
$\mathbf{p}\in \Supp (M)$ and $\mathbf{q}\in \Supp (N)$. If

\begin{center}
$0\rightarrow M\longrightarrow I^0\longrightarrow
I^1\longrightarrow \ldots$
\end{center}
is the minimal injective resolution of $M$ in $\Gamma -\Mod$ and
$E(\Gamma /\mathbf{p})$ is an injective indecomposable
$\Gamma$-module appearing as direct summand of some $I^i$, then
$\mathbf{p}\in \Supp  (M)$ (cf. \cite{mat}[Theorem 18.7]). It follows
that $\Hom_\Gamma (N,I^i)=0$, and hence $\Ext_\Gamma ^i(N,M)=0$, for
all $i\geq 0$. That $\Ext_\Gamma ^i(M,N)=0$ for all $i\geq 0$ follows
by symmetry.
\end{proof}

%

\section{Algebras with a commutative Harish-Chandra subalgebra and lifting of torsion theories}
Throughout the rest of the paper  $U$ and $\Gamma$ satisfy the
Setup~\ref{setup}. We denote by $j:\Gamma\hookrightarrow U$ the
canonical inclusion and by $j_*:U-\Mod\longrightarrow\Gamma-\Mod$
the restriction of scalar functor. It is clear that if $\mathcal{T}$
is a (hereditary) torsion class in $\Gamma -\Mod$, then
$\hat{\mathcal{T}}=j_*^{-1}(\mathcal{T}):=\{T\in U-\Mod:$
$j_*(T)\in\mathcal{T}\}$ is a (hereditary) torsion class in
$U-\Mod$. However, if $M$ is an $U$-module, then its torsion
$\Gamma$-submodule $t(M)$ and its torsion $U$-submodule $\hat{t}(M)$
satisfy an inclusion $\hat{t}(M)\subseteq t(M)$ that might be
strict. Equality happens exactly when $t(M)$ is an $U$-submodule of
$M$. That justifies the following.

\begin{definition} \label{definition_liftable_torsion_theory}
A torsion theory $(\mathcal{T},\mathcal{F})$ in $\Gamma -\Mod$ is
called \emph{liftable} to $U-\Mod$ in case $t(M)$ is a $U$-submodule
of $M$, for every $U$-module $M$.
\end{definition}

The following is a general criterion for the lifting of a torsion
theory.

\begin{proposition} \label{liftability_-_general_criterion}
Let $Z\subseteq \Spec\Gamma$ be a closed under specialization subset
and $(\mathcal{T}_Z,\mathcal{F}_Z)$ be its associated torsion theory
in $\Gamma -\Mod$. The following assertions are equivalent:

\begin{enumerate}
\item $(\mathcal{T}_Z,\mathcal{F}_Z)$ is liftable to $U-\Mod$ \item
For each prime ideal $\mathbf{p}$ (minimal) in $Z$, the  $U$-module
$U/U\mathbf{p}$ belongs to $\mathcal{T}_Z$ when looked at as
$\Gamma$-module.
\end{enumerate}
\end{proposition}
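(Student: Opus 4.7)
For direction (1)$\Rightarrow$(2), my plan is to fix a prime $\mathbf{p}\in Z$ and consider $M:=U/U\mathbf{p}$ together with its canonical $U$-generator $\bar{1}$. Since $\mathbf{p}\subseteq \ann_\Gamma (\bar{1})$ and $Z$ is closed under specialization, every prime containing $\ann_\Gamma (\bar{1})$ lies in $Z$; by Proposition~\ref{torsion_submodule} this puts $\bar{1}\in t_Z(M)$. Liftability of $(\mathcal{T}_Z,\mathcal{F}_Z)$ forces $t_Z(M)$ to be a $U$-submodule, hence $t_Z(M)\supseteq U\bar{1}=M$, so $U/U\mathbf{p}\in\mathcal{T}_Z$ as required.

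For the converse, I would start with an arbitrary $U$-module $M$ and $x\in t_Z(M)$, and reduce the claim $Ux\subseteq t_Z(M)$ to showing that the cyclic $U$-submodule $Ux$ lies in $\mathcal{T}_Z$ as a $\Gamma$-module. Using Proposition~\ref{torsion_submodule}(4) in its $\Min Z$ formulation, I can pick $\mathbf{p}_1,\dots,\mathbf{p}_r\in\Min Z$ and $n_i>0$ with $Ix=0$, where $I=\mathbf{p}_1^{n_1}\cdots\mathbf{p}_r^{n_r}$. Then $UI\subseteq \ann_U(x)$, so $Ux$ is a $U$-quotient of $U/UI$, and it is enough to prove $U/UI\in\mathcal{T}_Z$ as a left $\Gamma$-module.

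I would establish this by induction on the total number of prime factors in $I$. The inductive step uses the short exact sequence
$$0\to UJ/UJ\mathbf{q}\to U/UJ\mathbf{q}\to U/UJ\to 0,$$
where $J$ is a partial sub-product of $I$ and $\mathbf{q}\in\{\mathbf{p}_1,\dots,\mathbf{p}_r\}$. Since $\Gamma$ is Noetherian, $J$ is a finitely generated ideal, hence $J/J\mathbf{q}$ is a finitely generated $\Gamma/\mathbf{q}$-module, and therefore a $\Gamma$-quotient of $(\Gamma/\mathbf{q})^m$ for some $m$. Applying $U\otimes_\Gamma -$ (using the right $\Gamma$-structure on $U$) and the canonical identification $U\otimes_\Gamma \Gamma/\mathbf{q}\cong U/U\mathbf{q}$ yields a $U$-linear, hence $\Gamma$-linear, surjection $(U/U\mathbf{q})^m\twoheadrightarrow UJ/UJ\mathbf{q}$ given by $u\otimes [g]\mapsto [ug]$. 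Hypothesis (2) together with closure of $\mathcal{T}_Z$ under direct sums and quotients places $UJ/UJ\mathbf{q}$ in $\mathcal{T}_Z$; combined with the inductive hypothesis $U/UJ\in\mathcal{T}_Z$ and closure under extensions, this delivers $U/UJ\mathbf{q}\in\mathcal{T}_Z$ and completes the induction.

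The main technical obstacle I foresee is keeping the two $\Gamma$-actions straight: $U$ is a $U$-$\Gamma$-bimodule, and both the tensor product $U\otimes_\Gamma -$ (using the right action) and the restriction $j_*$ (using the left action) must be used consistently. That they match cleanly is essentially due to $\Gamma$ being commutative. A secondary subtlety is that the reduction in paragraph two really does need the $\Min Z$ version of Proposition~\ref{torsion_submodule}(4), which is precisely why condition (2) can be stated for minimal primes only; one should also note that the Harish-Chandra property of $\Gamma$ plays no role in this particular argument, only the fact that $\Gamma$ is Noetherian.
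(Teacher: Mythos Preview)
Your proof is correct. The direction $(1)\Rightarrow(2)$ is essentially identical to the paper's argument.

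For $(2)\Rightarrow(1)$ you take a genuinely different, though equally valid, route. The paper argues directly inside the given $U$-module $M$: by induction on $k=n_1+\cdots+n_r$, it uses that $U\mathbf{p}_rx\subseteq t_Z(M)$ (induction hypothesis) to produce a well-defined $\Gamma$-map $U/U\mathbf{p}_r\to M/t_Z(M)$ whose image is simultaneously in $\mathcal{T}_Z$ and in $\mathcal{F}_Z$, hence zero. You instead reduce the whole question to the universal statement $U/UI\in\mathcal{T}_Z$ and run the induction there, using the short exact sequence $0\to UJ/UJ\mathbf{q}\to U/UJ\mathbf{q}\to U/UJ\to 0$ together with the surjection $(U/U\mathbf{q})^m\twoheadrightarrow UJ/UJ\mathbf{q}$ coming from finite generation of $J$. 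Your approach relies on closure of $\mathcal{T}_Z$ under direct sums, quotients and extensions, whereas the paper's relies on the torsion/torsion-free dichotomy in $M/t_Z(M)$; both are standard properties of hereditary torsion classes, so neither argument is deeper than the other. Your version has the mild advantage of isolating a reusable fact ($U/UI\in\mathcal{T}_Z$ for every product $I$ of primes in $\Min Z$), while the paper's version avoids the tensor-product bookkeeping. Your closing observations about needing the $\Min Z$ formulation of Proposition~\ref{torsion_submodule}(4) and about the Harish-Chandra hypothesis being irrelevant here are both accurate.
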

\begin{proof}
$(1)\Longrightarrow (2)$ Let us take $\mathbf{p}\in Z$. Then the
canonical generator $x=1+U\mathbf{p}$ of $U/U\mathbf{p}$ belongs
to $t_Z(U/U\mathbf{p})$ (see Proposition \ref{torsion_submodule}).
Since $t_Z(U/U\mathbf{p})$ is a $U$-submodule of $U/U\mathbf{p}$
we conclude that $U/U\mathbf{p} =t_Z(U/\mathbf{p})$ and condition
(2) holds.

$(2)\Longrightarrow (1)$ Let $M\neq 0$ be an arbitrary nonzero
$U$-module. If $0\neq x\in t_Z(M)$ then, by Proposition
\ref{torsion_submodule}, there are
$\mathbf{p}_1,\dots,\mathbf{p}_r\in \Min Z$ and positive integers
$n_1,\dots,n_r>0$ such that
$\mathbf{p}_1^{n_1}\cdot\ldots\cdot\mathbf{p}_r^{n_r}x=0$.   We
shall prove that $Ux\subseteq t_Z(M)$ by induction on
$k=n_1+\ldots+n_r$. If $k=1$ then we have a minimal $\mathbf{p}\in
Z$ such that $\mathbf{p} x=0$. Then we get an epimorphism of
$U$-modules $U/U\mathbf{p}\twoheadrightarrow Ux$
($\bar{u}=u+U\mathbf{p}\rightsquigarrow ux$) whose domain belongs
to $\mathcal T_Z$ when viewed as a $\Gamma$-module. Then $Ux$
belongs to $\mathcal{T}_Z$ when viewed as a $\Gamma$-module, so
that $Ux\subseteq t_Z(M)$.

 Suppose  now that $k>1$. If $\mathbf{p}_rx=0$ then we are done. So we
can assume that $\mathbf{p}_rx\neq 0$. The induction hypothesis says
that $U\mathbf{p}_rx\subseteq t_Z(M)$, from which it follows that
the assignment $\bar{u}=u+U\mathbf{p}_r\rightsquigarrow
\overline{ux}=ux+t_Z(M)$ gives a well-defined map
$f:U/U\mathbf{p}_r\longrightarrow M/t_Z(M)$, which is clearly a
homomorphism of $\Gamma$-modules. Then we have that
$\Im(f)=(Ux+t_Z(M))/t_Z(M)\in\mathcal T_Z$ since $U/U\mathbf{p}_r$
belongs to $\mathcal T_Z$. But we also have that
$\Im(f)\in\mathcal{F}_Z$ because $\Im(f)$ is a $\Gamma$-submodule of
$M/t_Z(M)$. It follows that $\Im(f)=0$, so that $Ux\subseteq
t_Z(M)$.
\end{proof}

Note that in our setting the commutative algebra $\Gamma$ always
has finite Krull dimension, so that the (co)height of any of its
prime ideal is a natural number. We are now in the position to
prove our main result, which implies Theorem~A.

\begin{theorem} \label{liftability_of_canonical_torsion_theories}
\begin{itemize}
\item[(1)] Let $i$ be any natural number. The torsion theory
$(\mathcal{T}_i,\mathcal{F}_i)$ is liftable to $U-\Mod$.
\item[(2)]\label{same_coheight_in_Ass(M)} Let $M$ be a simple
$U$-module. There exists a (unique) natural number $i$ such that
$t_i(M)=M$ and $t_{i-1}(M)=0$. In that case, all prime ideals in
$\Ass (M)$ have coheight exactly $i$.
\end{itemize}
\end{theorem}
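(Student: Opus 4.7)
The plan is to deduce both parts of the theorem from a single structural lemma about bimodules, combined with the criterion in Proposition \ref{liftability_-_general_criterion}. I would begin by reducing part (1) to the claim that, for every prime $\mathbf{p}$ of coheight at most $i$, the module $U/U\mathbf{p}$ belongs to $\mathcal{T}_i$ when viewed as a left $\Gamma$-module. Writing $U=\sum_{u\in U}\Gamma u\Gamma$ and noting that $\Gamma u\mathbf{p}\subseteq U\mathbf{p}$, I would realize $U/U\mathbf{p}$ as a sum of left $\Gamma$-module quotients of the bimodules $L_u:=\Gamma u\Gamma/\Gamma u\mathbf{p}$. Since $\mathcal{T}_i$ is closed under sums and quotients, everything is reduced to the following key lemma: \emph{if $L$ is a $\Gamma$-bimodule that is finitely generated on both sides and satisfies $L\mathbf{p}=0$, then every $\mathbf{q}\in\Ass_\Gamma(L)$ (left assassin) has $\cht(\mathbf{q})\leq\cht(\mathbf{p})$.} This is exactly the point where the Harish-Chandra hypothesis will enter, since it guarantees double finiteness of each $\Gamma u\Gamma$.

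The main work will be to prove this key lemma, and this is the hard part of the argument. My approach is to use that both $\Gamma$-actions on $L$ commute by associativity, so that $L$ becomes a module over the commutative ring $\Gamma\otimes_K\Gamma/\mathbf{p}$. I would let $\bar\Gamma$ denote the image of $\Gamma\otimes_K\Gamma/\mathbf{p}$ in $\End_K(L)$, a commutative ring acting faithfully on $L$. The right multiplications by elements of $\Gamma/\mathbf{p}$ are left $\Gamma$-linear endomorphisms of $L$; hence the image of $\Gamma/\mathbf{p}$ in $\bar\Gamma$ lies inside $\End_{\Gamma\text{-}\mathrm{left}}(L)$. From any surjection $\Gamma^n\twoheadrightarrow L$ one obtains an embedding $\End_{\Gamma\text{-}\mathrm{left}}(L)\hookrightarrow L^n$, so Noetherianity of $\Gamma$ forces this endomorphism ring to be finitely generated as a $\Gamma$-module. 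Consequently $\bar\Gamma$ is finite, hence integral, over the image of $\Gamma$, and symmetrically integral over the image of $\Gamma/\mathbf{p}$. Using that $\ann_\Gamma^{\mathrm{left}}(L)=\ker(\Gamma\to\bar\Gamma)$ by faithfulness, the two integral extensions yield
\[
\dim\bigl(\Gamma/\ann_\Gamma^{\mathrm{left}}(L)\bigr)=\dim\bar\Gamma\leq\dim(\Gamma/\mathbf{p})=\cht(\mathbf{p}).
\]
Every $\mathbf{q}\in\Ass_\Gamma(L)$ contains $\ann_\Gamma^{\mathrm{left}}(L)$, so $\cht(\mathbf{q})=\dim(\Gamma/\mathbf{q})\leq\cht(\mathbf{p})$, closing the lemma and thus part (1).

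For part (2), I would simply observe that because $\Gamma$ has finite Krull dimension one has $\Gamma\text{-}\Mod=\bigcup_{i\geq 0}\mathcal{T}_i$, so there is a smallest natural number $i$ with $t_i(M)=M$. If $i>0$, then $t_{i-1}(M)\neq M$ by minimality; by part (1) already proved, $t_{i-1}(M)$ is a $U$-submodule of $M$, and simplicity forces $t_{i-1}(M)=0$. Corollary \ref{T_i-torsion T_i-1-torsionfree} then says that all primes in $\Ass(M)$ have coheight exactly $i$, completing the proof. To sum up, the entire theorem hinges on the Key Lemma, whose point is to transfer a dimension bound from the right action to the left action via the auxiliary commutative ring $\bar\Gamma$; this is the only nontrivial step, and everything else is a formal consequence of what has already been built in Section~\ref{sec-torsion}.
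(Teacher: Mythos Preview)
Your argument is correct, and it takes a genuinely different route from the paper's proof. Both proofs begin identically, reducing via Proposition~\ref{liftability_-_general_criterion} to showing that the left $\Gamma$-module $U/U\mathbf{p}$ lies in $\mathcal{T}_i$ whenever $\cht(\mathbf{p})\leq i$, and both finish part (2) in the same way. The divergence is in how the dimension bound is obtained. The paper fixes $u\in U$, uses that $\Gamma$ is a finitely generated $K$-algebra, chooses a filtration $(F_k)$ of $\Gamma$ by degree, and compares growth rates: from a finite generating set of $\Gamma u\Gamma$ as a right $\Gamma$-module one gets $F_k u\subseteq\sum u_jF_{sk}$, whence $\gkdim\bigl(\Gamma/(U\mathbf{p}:u)\bigr)\leq\gkdim(\Gamma/\mathbf{p})=i$, and then invokes the equality of Gelfand--Kirillov and Krull dimension for affine commutative algebras. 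Your Key Lemma replaces this analytic estimate by a purely ring-theoretic one: the auxiliary commutative ring $\bar\Gamma\subseteq\End_K(L)$ is squeezed between the images $A$ of $\Gamma$ and $B$ of $\Gamma/\mathbf{p}$, and double finite generation of $L$ forces $\bar\Gamma$ to be module-finite over each, so $\dim A=\dim\bar\Gamma=\dim B\leq\dim(\Gamma/\mathbf{p})$. (One small point left implicit: you need $A\subseteq\End_{\Gamma\text{-left}}(L)$ as well, which holds because $\Gamma$ is commutative; this is what gives $\bar\Gamma\subseteq\End_{\Gamma\text{-left}}(L)$ rather than just $B$.)

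What each approach buys: your argument is more conceptual, handles all $i$ uniformly without the separate $i=0$ base case or induction, and uses only that $\Gamma$ is commutative Noetherian---the finite generation of $\Gamma$ as a $K$-algebra is never invoked, so the result holds in slightly greater generality. The paper's filtration argument is more hands-on and stays closer to the explicit presentation of $U$ in Setup~\ref{setup}, but it genuinely needs the affine hypothesis on $\Gamma$ to identify Krull dimension with Gelfand--Kirillov dimension.
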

\begin{proof}
We prove the first statement by induction on $i$. If $i=0$ we take
$\mathbf{m}\in \Min Z_0=Z_0=\Specm \Gamma$. In order to prove that
$U/U\mathbf{m}\in\mathcal{T}_0$, thus  ending the proof (cf.
Proposition\ref{liftability_-_general_criterion}),  it is enough
to prove that $\frac{\Gamma u\Gamma
+U\mathbf{m}}{U\mathbf{m}}\cong\frac{\Gamma u\Gamma}{\Gamma
u\Gamma\cap U\mathbf{m}}$ is a 'left' $\Gamma$-module in
$\mathcal{T}_0$, for all $u\in U$. Indeed we have an epimorphism
in $\Gamma -\Mod$

\begin{center}
$\frac{\Gamma u\Gamma}{\Gamma
u\mathbf{m}}\twoheadrightarrow\frac{\Gamma u\Gamma}{\Gamma
u\Gamma\cap U\mathbf{m}}$.
\end{center}
But since $\Gamma u\Gamma$ is  finitely generated as right
$\Gamma$-module it follows that $\frac{\Gamma u\Gamma}{\Gamma
u\mathbf{m}}$ is finite dimensional as $K$-vector space. In
particular $\frac{\Gamma u\Gamma}{\Gamma u\Gamma\cap U\mathbf{m}}$
is a 'left' $\Gamma$-module of finite length and hence belongs to
$\mathcal{T}_0$.

 Suppose  now that $i>0$ and $i<d=Kdim(\Gamma )$ (the case $i\geq d$
is trivial). If $\mathbf{p}\in \Min Z_i$ and $\cht(\mathbf{p})<i$  then
the induction hypothesis says that
$U/U\mathbf{p}\in\mathcal{T}_{i-1}\subset\mathcal{T}_i$. We assume
then that $\cht(\mathbf{p})=i$.   According to Proposition
\ref{liftability_-_general_criterion}, it will be enough to prove
that $U/U\mathbf{p}$ belongs to $\mathcal{T}_i$ when viewed as a
$\Gamma$-module. This is turn equivalent to prove that, for each
$u\in U$, all the prime ideals of $\Gamma$ containing $\ann_\Gamma
(u+U\mathbf{p})=(U\mathbf{p}:u):=\{g\in\Gamma :$ $gu\in
U\mathbf{p}\}$ have coheight $\leq i$ (cf. Proposition \ref{torsion_submodule}). Therefore our goal is to prove that the Krull dimension
of the algebra $\Gamma/(U\mathbf{p}:u)$ is $\leq i$, for all $u\in
U$. For that we shall use  the fact that the Krull dimension of this
latter algebra coincides with its Gelfand-Kirillov dimension (cf
\cite{KL}[Proposition 7.9])

We fix an element $u\in U$,  a finite set of generators
$\{u=u_1,u_2,\ldots, u_n\}$ of $\Gamma u\Gamma$ as right
$\Gamma$-module and a finite set of generators $\{t_1,\ldots, t_m\}$
of $\Gamma$ as a $K$-algebra. We consider the filtration
$(F_k)_{k\geq 0}$ on $\Gamma$ obtained by taking as $F_k$ the vector
subspace of $\Gamma$ generated by the monomials of degree $\leq k$
on the $t_i$. The induced filtration on $\Gamma /(U\mathbf{p}:u)$ is
given by $(\frac{F_k+(U\mathbf{p}:u)}{(U\mathbf{p}:u)})_{k\geq 0}$.
The multiplication map $\bar{g}\rightsquigarrow gu+U\mathbf{p}$ is a
$K$-linear isomorphism
$\frac{F_k+(U\mathbf{p}:u)}{(U\mathbf{p}:u)}\stackrel{\cong}{\longrightarrow}\frac{F_ku+U\mathbf{p}}{U\mathbf{p}}$,
for each $k\geq 0$.

Due to our choices, we have that $t_iu_j=\sum_{1\leq l\leq
n}u_lg_{ij}^l$, with $g_{ij}^l\in\Gamma$,  for all $i=1,\ldots, r$
and $j=1,\ldots ,n$. There exists a minimal positive integer $s>0$
such that $\{g_{ij}^l\}\subset F_s$. An easy induction gives that
$F_ku_j\subseteq\sum_{1\leq i\leq n}u_iF_{sk}$, for all $k\geq 0$
and all $j=1,\ldots,n$. In particular we have
$F_ku\subseteq\sum_{1\leq i\leq n}u_iF_{sk}$, and hence
$\frac{F_ku +U\mathbf{p}}{U\mathbf{p}}\subseteq\sum_{1\leq i\leq
n}\frac{u_iF_{ks}+U\mathbf{p}}{U\mathbf{p}}$, for all $k\geq 0$.
Note that we have a surjective $K$-linear map

\begin{center}
$\frac{F_{sk}+U\mathbf{p}}{U\mathbf{p}}\twoheadrightarrow\frac{u_iF_{ks}+U\mathbf{p}}{U\mathbf{p}}$
($g+U\mathbf{p}\rightsquigarrow u_ig+U\mathbf{p}$).
\end{center}
Then,  taking $K$-dimensions, we obtain

\begin{center}
$dim(\frac{F_ku+U\mathbf{p}}{U\mathbf{p}})\leq s\cdot
dim(\frac{F_{ks}+U\mathbf{p}}{U\mathbf{p}})$,
\end{center}
and hence
\begin{center}
$\frac{log(dim(\frac{F_ku +U\mathbf{p}}{U\mathbf{p}}))}{log(k)}\leq
\frac{log(s\cdot dim(\frac{F_{ks}
+U\mathbf{p}}{U\mathbf{p}}))}{log(k)}$,  \hspace*{1cm}(*)
\end{center}
for all $k>0$. Note that we obtain a filtration $(F'_k)_{k\geq 0}$
of the algebra $\Gamma$ by putting $F'_k=F_{sk}$, for all $k\geq 0$.
Then, by applying limit superior to the inequality (*) and bearing
in mind that the Gelfand-Kirillov dimension decreases by passing to
factor algebras, we get that

\begin{center}
$GKdim(\Gamma /(U\mathbf{p}:u))\leq
GKdim(\Gamma/(U\mathbf{p}\cap\Gamma))\leq GKdim(\Gamma
/\mathbf{p})=i$.
\end{center}

This proves the first statement of the theorem. Let us now put
$i=min\{j\geq 0:$ $M\in\mathcal{T}_j\}$. Then we have $t_i(M)=M$ and
$t_{i-1}(M)\subsetneq M$ (convening that $t_{-1}(M)=0$). By (1), it
follows that $t_{i-1}(M)$ is a proper $U$-submodule of $M$. The
simplicity of $M$ gives that $t_{i-1}(M)=0$ and, using Corollary
\ref{T_i-torsion T_i-1-torsionfree},   the proof is completed.
\end{proof}

\begin{question and remark}\label{rem-question}
According to Proposition~\ref{coprimes_in_Ass}, if $M$ is a simple
$U$-module and the prime ideals in $\Ass (M)$ are pairwise coprime
(e.g. if $M\in\mathcal{T}_0$) then, as $\Gamma$-module, we have a
decomposition $M=\oplus_{\mathbf{p}\in \Ass (M)}M(\mathbf{p})$. For
an arbitrary simple $M$, using
Theorem~\ref{liftability_of_canonical_torsion_theories}, it is not
difficult to see that the sum $\sum_{\mathbf{p}\in \Ass
(M)}M(\mathbf{p})$ is direct, so that $\oplus_{\mathbf{p}\in \Ass
(M)}M(\mathbf{p})$ is a $\Gamma$-submodule of $M$. Is it a
$U$-submodule (so that the equality $M=\oplus_{\mathbf{p}\in \Ass
(M)}M(\mathbf{p})$ holds)?
\end{question and remark}

Given a simple $U$-module, one needs recipes to calculate the
$i\geq 0$ such that $t_i(M)=M$ and $t_{i-1}(M)=0$. Recall that a
subset $\{g_1,\ldots, g_r\}\subset\Gamma$ is called a
\emph{regular sequence} in case $\sum_{1\leq i\leq n}\Gamma
g_i\neq\Gamma$ and $\bar{g}_k:=g_k+\sum_{1\leq i<k}\Gamma g_i$ is
not a zero divisor in $\Gamma/\sum_{1\leq i<k}\Gamma g_i$,  for
all $k=1,\ldots, n$. In that case $r$ is called the \emph{length}
of the regular sequence. We refer the reader to \cite{mat}[pages
136 and 250] for the definitions of Cohen-Macaulay and
equidimensional commutative rings, that we use in the following
result.

\begin{proposition} \label{regular_sequence}
 Suppose  that  $\Gamma$ is Cohen-Macaulay and equidimensional  and
let $d=Kdim(\Gamma )$ be its Krull dimension. If $M$ is a $U$-module
such that all ideals in $\Ass (M)$ have the same coheight (e.g. a
simple $U$-module),
 then the
following assertions are equivalent:

\begin{enumerate}
\item $t_i(M)=M$ and $t_{i-1}(M)=0$ \item There is a regular
sequence in $\Gamma$,  maximal with the property of annihilating
some $x\in M\setminus\{0\}$, which has length $d-i$.
\end{enumerate}
\end{proposition}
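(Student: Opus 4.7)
The plan is to first apply Corollary \ref{T_i-torsion T_i-1-torsionfree}, which reduces assertion (1) to the statement that every prime in $\Ass (M)$ has coheight exactly $i$. Since $\Gamma$ is Cohen--Macaulay and equidimensional, one has $\hit(\mathbf{p})+\cht(\mathbf{p})=d$ for every prime $\mathbf{p}$, and any ideal generated by a regular sequence $g_1,\ldots,g_r$ is unmixed of height exactly $r$; in particular its associated primes are precisely its minimal primes, all of height $r$. With these translations (height $\leftrightarrow$ length of a regular sequence in $\Gamma$, coheight $\leftrightarrow$ $d$ minus that length), the equivalence becomes natural.

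For $(1)\Longrightarrow(2)$, I would pick any $\mathbf{p}\in \Ass (M)$, say $\mathbf{p}=\ann_\Gamma(x)$ with $x\neq 0$; by (1) and the remarks above, $\hit(\mathbf{p})=d-i$, so the Cohen--Macaulay property furnishes a regular sequence $g_1,\ldots,g_{d-i}$ lying in $\mathbf{p}$, which automatically annihilates $x$. To verify maximality, suppose toward contradiction that it extends to a regular sequence $g_1,\ldots,g_{d-i+1}$ annihilating some $y\in M\setminus\{0\}$. Then every $\mathbf{q}\in \Ass (\Gamma y)$ contains $(g_1,\ldots,g_{d-i+1})$, an ideal of height $d-i+1$, forcing $\cht(\mathbf{q})\leq i-1$; but $\Ass (\Gamma y)\subseteq \Ass (M)$ imposes $\cht(\mathbf{q})=i$, a contradiction.

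For $(2)\Longrightarrow(1)$, let $c$ be the common coheight of primes in $\Ass (M)$; I aim to show $c=i$. The inequality $c\leq i$ follows because any $\mathbf{q}\in \Ass (\Gamma x)\subseteq \Ass (M)$ contains $(g_1,\ldots,g_{d-i})$, so $\hit(\mathbf{q})\geq d-i$. For $c\geq i$, suppose $c<i$ and consider the nonzero submodule $N=\{z\in M:g_1 z=\cdots=g_{d-i}z=0\}$, which contains $x$. Every $\mathbf{q}\in \Ass (N)\subseteq \Ass (M)$ then satisfies $\hit(\mathbf{q})=d-c>d-i$ while $\mathbf{q}\supseteq(g_1,\ldots,g_{d-i})$. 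The associated primes $\mathbf{p}_1,\ldots,\mathbf{p}_r$ of $\Gamma/(g_1,\ldots,g_{d-i})$ are all of height exactly $d-i$ by the Cohen--Macaulay (unmixedness) property, so $\mathbf{q}\not\subseteq\mathbf{p}_j$ for any $j$. Prime avoidance yields $g\in\mathbf{q}\setminus\bigcup_j\mathbf{p}_j$ whose image is a non-zero-divisor modulo $(g_1,\ldots,g_{d-i})$, so $g_1,\ldots,g_{d-i},g$ is a regular sequence; writing $\mathbf{q}=\ann_\Gamma(z)$ for a nonzero $z\in N$, this longer sequence annihilates $z$, contradicting the maximality of the original sequence.

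The main obstacle I anticipate is this prime-avoidance step in $(2)\Longrightarrow(1)$: one must actually extend the given sequence rather than merely produce some unrelated longer one. This rests on combining (i) the unmixedness of $\Gamma/(g_1,\ldots,g_{d-i})$ granted by Cohen--Macaulayness, (ii) the height-versus-coheight bookkeeping coming from equidimensionality, and (iii) the hypothesis that all associated primes of $M$ share a common coheight, which is what forces the inherited associated primes of $N$ into a strictly higher-height locus than any minimal prime over $(g_1,\ldots,g_{d-i})$.
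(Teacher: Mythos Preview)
Your argument is correct and follows essentially the same line as the paper's. Both reduce assertion (1), via Corollary~\ref{T_i-torsion T_i-1-torsionfree}, to the statement that every $\mathbf{q}\in\Ass(M)$ has coheight $i$, and then use the equidimensional Cohen--Macaulay identity $\hit(\mathbf{q})+\cht(\mathbf{q})=d$ together with the fact that the maximal length of a regular sequence inside $\mathbf{q}$ equals $\hit(\mathbf{q})$.

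The only difference is organizational: the paper first reformulates (2) as ``there is a regular sequence of length $d-i$ contained in some $\mathbf{q}\in\Ass(M)$ and maximal with that property'' (using that a regular sequence annihilates some nonzero element of $M$ iff it lies in some associated prime of $M$), and then invokes \cite{Ku}[Theorem~VI.3.14] as a black box for the Cohen--Macaulay input. You instead prove each implication directly, making explicit the unmixedness of $\Gamma/(g_1,\ldots,g_{d-i})$ and the prime-avoidance step needed to extend the sequence. Your auxiliary submodule $N$ is not strictly necessary---working with $\Ass(\Gamma x)$ already suffices, since any $\mathbf{q}=\ann_\Gamma(gx)\in\Ass(\Gamma x)$ automatically contains $\ann_\Gamma(x)\supseteq(g_1,\ldots,g_{d-i})$---but it does no harm.
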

\begin{proof}
The equidimensionality guarantees that
$\hit(\mathbf{p})+\cht(\mathbf{p})=d$, for all $\mathbf{p}\in
\Spec\Gamma$ (cf. \cite{Ku}[Corollary II.3.6]). Note also that if
$\{g_1,\ldots, g_k\}$ is a regular sequence contained in
$\ann_\Gamma (x)$, for some $x\in M\setminus\{0\}$, then,
replacing if necessary $x$ by some $gx\neq 0$ with $g\in G$, it is
not restrictive to assume that $\ann_\Gamma (x)=\mathbf{q}$, for
some prime ideal $\mathbf{q}\in \Ass (M)$. So assertion (2) is
equivalent to the following:

(2') There is a regular sequence in $\Gamma$ of length $d-i$
contained in some $\mathbf{q}\in \Ass (M)$ and maximal with that
property.

By \cite{Ku}[Theorem VI.3.14] and the fact that all prime ideals in
$\Ass (M)$ have the same (co)height, this condition 2' is in turn
equivalent to say that $d-i=\hit(\mathbf{q})$, for every
$\mathbf{q}\in \Ass (M)$. Therefore assertion 2) holds if, and only
if, $\cht(\mathbf{q})=i$ for all $\mathbf{q}\in \Ass (M)$. By
Corollary \ref{T_i-torsion T_i-1-torsionfree},
 this is equivalent to assertion
(1).

\end{proof}

\section{An approximation to the assassin of a  $U$-module}
The preceding section  shows that, given a simple $U$-module,  its
assassin as $\Gamma$-module, $\Ass (M)$, is an important
invariant. Therefore it is natural to give recipes to approximate
this subset of $\Spec(\Gamma)$. We will see in this section that,
knowing a prime $\mathbf{p}\in \Ass (M)$ and the finite subset
$\{u_1,\ldots, u_n\}\subset U$ of our setup (see \ref{setup}), one
can give a precise subset of $\Spec\Gamma$ in which $\Ass (M)$ is
contained.

We will follow the terminology used for maximal ideals in
\cite{dfo:hc}
 and, given $u\in U$, we denote by $X_u$ the set of
pairs $(\mathbf{q},\mathbf{p})\in \Spec\Gamma\times \Spec\Gamma$ such
that $\frac{\Gamma u\Gamma}{\mathbf{q}u\Gamma+\Gamma
u\mathbf{p}}\neq 0$ (or equivalently
$\frac{\Gamma}{\mathbf{q}}\otimes_\Gamma\Gamma
u\Gamma\otimes_\Gamma\frac{\Gamma}{\mathbf{p}}\neq 0$). For
simplicity, we shall write $\mathbf{q}\equiv_u\mathbf{p}$ whenever
$(\mathbf{q},\mathbf{p})\in X_u$.

Note that, due to Nakayama lemma,  if $H$ if a finitely generated
$\Gamma$-module and $\mathbf{q}\in \Supp (H)$ then $\mathbf{q}H\neq
H$. We will use this fact in the proof of the following result,
which is a crucial tool for our purposes.

\begin{lemma} \label{crucial_lemma_on_assassins}
Let $M$ be a $U$-module. The following assertions hold:

\begin{enumerate}
\item If $u\in U$, $x\in M$ and $\mathbf{q}\in \Supp (\Gamma ux)$, then
there exists $\mathbf{p}\in \Ass (\Gamma x)$ such that
$\mathbf{q}\equiv_u\mathbf{p}$ \item If all prime ideals in $\Ass (M)$
have the same coheight, then there is an inclusion

\begin{center}
$\Ass (\Gamma (x+y))\subseteq \Ass (\Gamma x)\cup \Ass (\Gamma y)$,
\end{center}
for all $x,y\in M$.
\end{enumerate}
\end{lemma}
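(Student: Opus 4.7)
The proof naturally splits along the two assertions, and both reduce to standard commutative algebra once one exploits the bimodule $H:=\Gamma u\Gamma$, which by Setup~\ref{setup}(3) is finitely generated both as a left and as a right $\Gamma$-module.

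For part (1), I would first build a bridge between $\Gamma ux$ and the bimodule. The canonical map of left $\Gamma$-modules $H\otimes_\Gamma\Gamma x\longrightarrow M$, $w\otimes z\mapsto wz$, has image containing $\Gamma ux$ (take $g\otimes x\mapsto gux$). Hence $\mathbf{q}\in\Supp(\Gamma ux)\subseteq\Supp_\Gamma(H\otimes_\Gamma\Gamma x)$, and since $H\otimes_\Gamma\Gamma x\cong H/HI$ (with $I:=\ann_\Gamma(x)$) is finitely generated on the left, Nakayama's lemma gives $\bar H\otimes_\Gamma\Gamma x\neq 0$, where $\bar H:=H/\mathbf{q}H$. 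Writing $J:=\ann_\Gamma(\bar H)$ for the annihilator of $\bar H$ as a (finitely generated) right $\Gamma$-module, a second application of Nakayama rephrases this nonvanishing as the ideal-theoretic statement $J+I\neq\Gamma$.

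The crucial step is to upgrade this to the existence of $\mathbf{p}\in\Ass(\Gamma/I)$ with $J+\mathbf{p}\neq\Gamma$, which (again by Nakayama) is equivalent to $\bar H\otimes_\Gamma\Gamma/\mathbf{p}\neq 0$, i.e.\ $(\mathbf{q},\mathbf{p})\in X_u$. I would argue by contradiction: if $J+\mathbf{p}=\Gamma$ for every one of the finitely many primes $\mathbf{p}_1,\dots,\mathbf{p}_k\in\Ass(\Gamma/I)$, then
\[\Gamma=\prod_i(J+\mathbf{p}_i)\subseteq J+\mathbf{p}_1\cdots\mathbf{p}_k\subseteq J+\sqrt I,\]
the last inclusion because $\mathbf{p}_1\cdots\mathbf{p}_k\subseteq\bigcap_i\mathbf{p}_i=\sqrt I$. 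By Noetherianness $\sqrt I^{\,n}\subseteq I$ for some $n$, and expanding $1=(a+b)^n$ with $a\in J$, $b\in\sqrt I$ yields $1\in J+I$, contradicting $J+I\neq\Gamma$.

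For part (2), given $\mathbf{r}=\ann_\Gamma(g(x+y))\in\Ass(\Gamma(x+y))$ with $g(x+y)\neq 0$, the cases $gx=0$ and $gy=0$ are immediate. Otherwise $\ann(gx)\cdot\ann(gy)\subseteq\ann(gx)\cap\ann(gy)\subseteq\mathbf{r}$, and primality of $\mathbf{r}$ forces, say, $\ann(gx)\subseteq\mathbf{r}$, i.e.\ $\mathbf{r}\in\Supp(\Gamma gx)$. Finite generation of $\Gamma gx$ supplies $\mathbf{p}\in\Ass(\Gamma gx)\subseteq\Ass(\Gamma x)\subseteq\Ass(M)$ with $\mathbf{p}\subseteq\mathbf{r}$. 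By hypothesis both primes have the same coheight $i$; but the inductive definition of the chain $(Z_j)$ prevents a strict inclusion $\mathbf{p}\subsetneq\mathbf{r}$ between primes of equal coheight, since then $\mathbf{r}\in\Spec\Gamma\setminus Z_{i-1}$ would strictly contain $\mathbf{p}$, violating the maximality of $\mathbf{p}$ in $\Spec\Gamma\setminus Z_{i-1}$. Hence $\mathbf{p}=\mathbf{r}\in\Ass(\Gamma x)$.

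The main obstacle is the bridging step in part (1): the naive attempt of picking a minimal element of $\Supp(\bar H)\cap V(I)$ is not enough, because such a minimum may be an embedded prime of $V(I)$ outside $\Ass(\Gamma/I)$. The Chinese-remainder/radical computation above sidesteps this using only the finiteness of $\Ass(\Gamma/I)$ and the identity $\bigcap_{\mathbf{p}\in\Ass(\Gamma/I)}\mathbf{p}=\sqrt I$. Part (2), by contrast, is a short coheight comparison; the hypothesis on $\Ass(M)$ is exactly what rules out the classical failure $\Ass(\Gamma(x+y))\not\subseteq\Ass(\Gamma x)\cup\Ass(\Gamma y)$ by forbidding the birth of embedded primes strictly dominating those of $\Gamma x$ and $\Gamma y$.
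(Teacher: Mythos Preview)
Your proof is correct in both parts, but the routes differ from the paper's.

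For part (1), the paper also starts by showing $\bar H\otimes_\Gamma\Gamma x\neq 0$ via Nakayama, but then invokes the existence of a finite filtration of the cyclic module $\Gamma x$ with successive quotients of the form $\Gamma/\mathbf{p}'$, $\mathbf{p}'\in\Supp(\Gamma x)$ (cf.\ \cite{Ku}, Proposition VI.2.6); right exactness of $\bar H\otimes_\Gamma(-)$ then forces $\bar H\otimes_\Gamma\Gamma/\mathbf{p}'\neq 0$ for some such $\mathbf{p}'$, and one simply passes down to any $\mathbf{p}\in\Ass(\Gamma x)$ contained in $\mathbf{p}'$ via the surjection $\Gamma/\mathbf{p}\twoheadrightarrow\Gamma/\mathbf{p}'$. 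Your argument replaces the filtration by an ideal-theoretic computation with the right annihilator $J$, the identity $\bigcap_{\mathbf{p}\in\Ass(\Gamma/I)}\mathbf{p}=\sqrt I$, and a Chinese-remainder/binomial trick to pass from $J+\sqrt I=\Gamma$ to $J+I=\Gamma$. This is slightly longer but entirely elementary, avoiding the prime-filtration theorem; the paper's route is shorter once that theorem is granted.

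For part (2), the paper first enlarges to $\Gamma x+\Gamma y$, uses the exact sequence $0\to\Gamma x\cap\Gamma y\to\Gamma x\oplus\Gamma y\to\Gamma x+\Gamma y\to 0$ to land any $\mathbf{q}\in\Ass(\Gamma x+\Gamma y)$ in $\Supp(\Gamma x)\cup\Supp(\Gamma y)$, and then argues that the coheight hypothesis forces $\mathbf{q}$ to be \emph{minimal} in whichever support it lies in, hence in $\Min\Ass\subseteq\Ass$. You instead work directly with an annihilator presentation $\mathbf{r}=\ann_\Gamma(g(x+y))$, use the product trick $\ann(gx)\cdot\ann(gy)\subseteq\mathbf{r}$ together with primality to land in $\Supp(\Gamma gx)$ (or $\Supp(\Gamma gy)$), and close with the same coheight rigidity. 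Both arguments hinge on the same observation that two primes of equal coheight cannot be strictly comparable; yours is a bit more explicit about the element-level bookkeeping, while the paper's is slightly more module-theoretic.
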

\begin{proof}
1) We have $\mathbf{q}\in \Supp (\Gamma ux)\subseteq \Supp (\Gamma
u\Gamma x)$. It follows that $\frac{\Gamma u\Gamma
x}{\mathbf{q}u\Gamma x}\neq 0$ since our setup \ref{setup}
guarantees that $\Gamma u\Gamma x$ is a finitely generated
$\Gamma$-module. The assignment $\bar{v}\otimes y\rightsquigarrow
\overline{vy}$ gives a surjective $K$-linear map

\begin{center}
$\frac{\Gamma u\Gamma}{\mathbf{q}u\Gamma}\otimes_\Gamma\Gamma
x\twoheadrightarrow\frac{\Gamma u\Gamma x}{\mathbf{q}u\Gamma x}\neq
0$.
\end{center}
It follows that  $\frac{\Gamma
u\Gamma}{\mathbf{q}u\Gamma}\otimes_\Gamma\Gamma x\neq 0$. But
$\Gamma x$ admits a finite filtration with successive factors
isomorphic to $\Gamma /\mathbf{p}$, with $\mathbf{p}\in \Supp (\Gamma
x)$ (see \cite{Ku}[Proposition VI.2.6]). We conclude that there is a
$\mathbf{p}'\in \Supp (\Gamma x)$ such that $\frac{\Gamma
u\Gamma}{\mathbf{q}u\Gamma}\otimes_\Gamma\frac{\Gamma}{\mathbf{p}'}\neq
0$. Choosing now $\mathbf{p}\in \Ass (\Gamma x)$ such that
$\mathbf{p}\subseteq\mathbf{p}'$, we get that $\frac{\Gamma
u\Gamma}{\mathbf{q}u\Gamma}\otimes_\Gamma\frac{\Gamma}{\mathbf{p}}\neq
0$ and hence $\mathbf{q}\equiv_u\mathbf{p}$.

3) Since we have an inclusion $\Gamma (x+y)\subseteq\Gamma x+\Gamma
y$ it will be enough to check that $\Ass (\Gamma x+\Gamma y)\subseteq
\Ass (\Gamma x)\cup \Ass (\Gamma y)$. To do that, we consider the
canonical exact sequence in $\Gamma -\Mod$:

\begin{center}
$0\rightarrow\Gamma x\cap\Gamma y\longrightarrow\Gamma x\oplus\Gamma
y\longrightarrow\Gamma x+\Gamma y\rightarrow 0$,
\end{center}
from which we get that $\Ass (\Gamma x+\Gamma y)\subseteq \Supp (\Gamma
x\oplus\Gamma y)=\Supp (\Gamma x)\cup \Supp (\Gamma y)$.

By hypothesis, all prime ideals in $\Ass (M)$ have the same coheight,
which implies that all of them are minimal in $\Supp (M)$. As a
consequence, if $\mathbf{q}\in \Ass (\Gamma x+\Gamma y)$ and we assume
that $\mathbf{q}\in \Supp (\Gamma x)$, then $\mathbf{q}$ is minimal in
$\Supp (\Gamma x)$. This implies that $\mathbf{q}\in \Min \Supp (\Gamma
x)=\Min \Ass (\Gamma x)\subseteq \Ass (\Gamma x)$. We  replace $x$ by $y$
in case $\mathbf{q}\in \Supp (\Gamma y)$, and the proof is finished.
\end{proof}

\subsection{Proof of Theorem B}
We are now in the position to prove Theorem~B.

 If
 $\mathbf{q}\in \Ass (M)$ then  we have $\mathbf{q}=\ann_\Gamma (ux)$, for some $u\in U$. If $u\in\Gamma$ then $\mathbf{q}=\mathbf{p}$ and there
 is nothing to prove.  So we assume $u\not\in\Gamma$,  in which case
$u$ is a sum of products of the form

\begin{center}
$g_1u_{k_1}g_2\ldots g_ru_{k_r}g_{r+1}$,
\end{center}
where the $g_k$ belong to $\Gamma$ and the $k_1,\ldots, k_r$
belong to $\{1,\ldots, n\}$. Lemma
\ref{crucial_lemma_on_assassins} allows us to assume, without loss
of generality, that

\begin{center}
$u=g_1u_{k_1}g_2\ldots g_ru_{k_r}g_{r+1}$,
\end{center}
something that we do from now on in this proof.

We then have

\begin{center}
$\mathbf{q}\in \Ass (\Gamma ux)\subseteq \Ass (\Gamma
u_{k_1}g_2...g_ru_{k_r}g_{r+1}x)$.
\end{center}
By Lemma \ref{crucial_lemma_on_assassins}(1), there is a
$\mathbf{q}_1\in \Ass (\Gamma g_2u_{k_2}...g_ru_{k_r}g_{r+1}x)$
such that $\mathbf{q}\equiv_{u_{k_1}}\mathbf{q}_1$. By induction
we get a sequence
$\mathbf{q}=\mathbf{q}_0,\mathbf{q}_1,...,\mathbf{q}_r$ of prime
ideals in $\Ass (M)$, whence of coheight exactly
$\cht(\mathbf{p})$ (see
Theorem~\ref{liftability_of_canonical_torsion_theories},
 such that $\mathbf{q}_r\in \Ass
(\Gamma g_{r+1}x)$ and
$\mathbf{q}_{i-1}\equiv_{u_{k_i}}\mathbf{q}_i$ for $i=1,...,r$.
But $\Ass (\Gamma g_{r+1}x)=\{\mathbf{p}\}$ since $\ann_\Gamma
(x)=\mathbf{p}$ is a prime ideal and $g_{r+1}x\neq 0$. Then
$\mathbf{q}_r=\mathbf{p}$ and the proof is finished.

\vspace*{0.3cm}

Theorem~B suggests to define, for each $0\leq i\leq d$,  a (not
necessarily symmetric) relation $\equiv$ in the set $\Min Z_i$ of
prime ideals of coheight $i$ by saying that
$\mathbf{q}\equiv\mathbf{p}$ if, and only if,  there are a
sequence $\mathbf{q}=\mathbf{q}_0,\mathbf{q}_1,\ldots,
\mathbf{q}_s=\mathbf{p}$ in $\Min Z_i$ and a sequence of indices
$k_1,\ldots, k_s\in\{1,\ldots, n\}$ such that
$\mathbf{q}_{i-1}\equiv_{u_{k_i}}\mathbf{q}_i$, for all
$i=1,\ldots, s$.

\begin{corollary} \label{same_coheight_in_Ass(M)_for_simple}
If $M$ is a simple $U$-module and $\mathbf{p},\mathbf{q}\in \Ass (M)$
then $\mathbf{q}\equiv\mathbf{p}$.
\end{corollary}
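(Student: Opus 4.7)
The plan is to reduce the corollary to a direct application of Theorem B combined with Theorem~\ref{liftability_of_canonical_torsion_theories}(2). The key observation is that simplicity of $M$ supplies, essentially for free, both structural inputs that Theorem B requires: cyclicity with prime annihilator on a generator, and the equicoheight condition on the assassin.

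First, since $M$ is a simple $U$-module, Theorem~\ref{liftability_of_canonical_torsion_theories}(2) provides a unique natural number $i\geq 0$ such that every prime ideal in $\Ass(M)$ has coheight exactly $i$. In particular, $\Ass(M)\subseteq \Min Z_i$, so the equicoheight hypothesis of Theorem B is automatic, and the sequence produced by Theorem B will a priori live in $\Min Z_i$, which is precisely where the relation $\equiv$ is defined.

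Next, fix $\mathbf{p},\mathbf{q}\in \Ass(M)$. By definition of the assassin, there is an element $x\in M$ with $\ann_\Gamma(x)=\mathbf{p}$, which is prime by assumption; since $M$ is simple and $x\neq 0$, we have $M=Ux$. Thus $M$ is cyclic, generated by an element whose $\Gamma$-annihilator is the prime $\mathbf{p}$, and all primes in $\Ass(M)$ share the coheight $i=\cht(\mathbf{p})$. These are exactly the hypotheses of Theorem B for $M$, $x$, and $\mathbf{q}$.

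Applying Theorem B, we obtain a sequence $\mathbf{q}=\mathbf{q}_0,\mathbf{q}_1,\ldots,\mathbf{q}_s=\mathbf{p}$ of prime ideals of coheight $i$ and indices $k_1,\ldots,k_s\in\{1,\ldots,n\}$ with
\[
\frac{\Gamma u_{k_j}\Gamma}{\mathbf{q}_{j-1}u_{k_j}\Gamma+\Gamma u_{k_j}\mathbf{q}_j}\neq 0
\]
for all $j=1,\ldots,s$. By definition of $X_{u_{k_j}}$ this says $\mathbf{q}_{j-1}\equiv_{u_{k_j}}\mathbf{q}_j$, and by the definition of $\equiv$ on $\Min Z_i$ this is exactly $\mathbf{q}\equiv\mathbf{p}$. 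There is no serious obstacle here: the substantive work has already been done in establishing Theorems A and B, and the corollary merely packages the fact that simplicity of $M$ automatically supplies the two ingredients (cyclicity with a prime-annihilated generator and uniform coheight in $\Ass(M)$) needed to invoke Theorem B.
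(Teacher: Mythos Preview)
Your proof is correct and follows essentially the same route as the paper: use simplicity to write $M=Ux$ for an element $x$ with $\ann_\Gamma(x)=\mathbf{p}$, invoke Theorem~\ref{liftability_of_canonical_torsion_theories}(2) to guarantee that all primes in $\Ass(M)$ share the same coheight, and then apply Theorem~B. Your write-up is simply a more explicit unpacking of the paper's two-line argument.
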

\begin{proof}
As $U$-module, $M$ is generated by any of its nonzero elements.
Choose $0\neq x\in M$ such that $\ann_\Gamma (x)=\mathbf{p}$ and
apply Theorem~\ref{liftability_of_canonical_torsion_theories}.
\end{proof}

We obtain immediately the following refinement of
Proposition~\ref{coprimes_in_Ass}.

\begin{corollary}
Let $M$ be a simple $U$-module and take $\mathbf{p}\in \Ass (M)$, with
$\cht(\mathbf{p})=i$. Suppose  that $\mathbf{q}$ and $\mathbf{q}'$ are
coprime whenever  $\mathbf{q}\neq\mathbf{q}'$ are distinct prime
ideals of $\Gamma$ of coheight $i$  such that
$\mathbf{q}\equiv\mathbf{p}$ and $\mathbf{q}'\equiv\mathbf{p}$. Then
we have a decomposition  $M=\oplus_{\mathbf{q}\in
\Ass (M)}M(\mathbf{q})$ as $\Gamma$-module.
\end{corollary}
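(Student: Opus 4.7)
The plan is to reduce the corollary to Proposition~\ref{coprimes_in_Ass} by verifying that under the stated hypothesis the prime ideals in $\Ass(M)$ are pairwise coprime. The proof will be essentially a three-step synthesis of results already established in the paper, with no new calculation required.

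First, since $M$ is a simple $U$-module, Theorem~\ref{liftability_of_canonical_torsion_theories}(2) applies: there is a natural number $j$ such that every prime in $\Ass(M)$ has coheight exactly $j$. Because $\mathbf{p}\in\Ass(M)$ and $\cht(\mathbf{p})=i$, we have $j=i$, so every element of $\Ass(M)$ has coheight $i$ and thus lies in $\Min Z_i$, the domain of the relation $\equiv$.

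Second, I would invoke Corollary~\ref{same_coheight_in_Ass(M)_for_simple}: for any $\mathbf{q}\in\Ass(M)$ we have $\mathbf{q}\equiv\mathbf{p}$. Combining with the previous step, the prime ideals of $\Ass(M)$ form a set of elements of $\Min Z_i$, each of which is $\equiv$-related to $\mathbf{p}$. The hypothesis of the corollary then forces any two distinct such primes to be coprime; i.e., the family $\Ass(M)$ consists of pairwise coprime prime ideals.

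Finally, I would apply Proposition~\ref{coprimes_in_Ass} to $M$, which immediately yields the decomposition $M=\bigoplus_{\mathbf{q}\in\Ass(M)}M(\mathbf{q})$ as a $\Gamma$-module. I do not foresee a genuine obstacle here, since the three ingredients plug together cleanly; the only subtlety worth noting is that Proposition~\ref{coprimes_in_Ass} is stated under pairwise coprimality of $\Min\Ass(M)$, but by the first step $\Min\Ass(M)=\Ass(M)$, so the hypothesis of that proposition is met verbatim.
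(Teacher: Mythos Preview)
Your proposal is correct and follows essentially the same route as the paper's proof: the paper invokes Theorem~B (via Corollary~\ref{same_coheight_in_Ass(M)_for_simple}) to obtain the inclusion $\Ass(M)\subseteq\{\mathbf{q}\in\Spec\Gamma:\cht(\mathbf{q})=i\text{ and }\mathbf{q}\equiv\mathbf{p}\}$, deduces pairwise coprimality from the hypothesis, and then applies Proposition~\ref{coprimes_in_Ass}. Your added remark that $\Min\Ass(M)=\Ass(M)$ (since all associated primes share the same coheight) is a nice clarification that the paper leaves implicit.
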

\begin{proof}
By Theorem~B, we have an inclusion $\Ass
(M)\subseteq\{\mathbf{q}\in \Spec\Gamma :$
$\cht(\mathbf{q})=i\text{ and }\mathbf{q}\equiv\mathbf{p}\}$.
Therefore the elements of $\Ass (M)$ are pairwise coprime and
Proposition \ref{coprimes_in_Ass} applies.
\end{proof}

The following example shows that in some circumstances (usually
when the coheight is large), Theorem~B is not sufficient to
approximate Ass(M).

\begin{example}\label{first Weyl}
Let $U=A_n(K)$ be the  Weyl algebra given by generators
$X_1,\ldots, X_n,Y_1,\ldots,Y_n$ subject to the relations

\begin{center}
$X_iX_j-X_jX_i=0=Y_iY_j-Y_jY_i$

$X_iY_j-Y_jX_i=\delta_{ij}$,
\end{center}
for all $i,j\in\{1,\ldots,n\}$, where $\delta_{ij}$ is the Kronecker
symbol. Assume $n>1$, put  $t_i=X_iY_i$  and put
$\Gamma=K[t_1,\ldots, t_n]$. Then $\Gamma$ and $U$ satisfy the
conditions of our setup \ref{setup} by taking  $u_j\in \{X_{\sigma
(j)}, Y_\sigma (j)\}$ for all $j=1,\ldots, n$, where $\sigma\in S_n$
is any permutation. If $\mathbf{p}=\Gamma (t_1-1)$ then
$\mathbf{q}\equiv\mathbf{p}$, for every prime ideal $\mathbf{q}\in
\Spec(\Gamma )$ of height $1$.
\end{example}
\begin{proof}
 For simplicity put $u_i=Y_i$ ($i=1,...,n$), the other choices being treated similarly. Then one
readily shows the equalities

\begin{center}
$Y_it_j=t_jY_i$ ($i\neq j$)

$Y_it_i=(t_i-1)Y_i$ (equivalently $t_iY_i=Y_i(t_i+1)$),
\end{center}
for all $i=1,...,n$. If $f,g\in\Gamma$ are irreducible polynomials
we derive from these equalities that $f\equiv_{Y_i}g$ if and only if
the polynomials $s_i(f):=f(t_1,...,t_{i-1},t_i+1,t_{i+1},...,t_n)$
and $g$ are not coprime (i.e. the prime ideals of $\Gamma$ generated
by them are not coprime). Indeed we have that
$fY_i\Gamma=Y_is_i(f)\Gamma$ and  $\Gamma Y_i\Gamma =Y_i\Gamma$
using the above equalities. But then the obvious isomorphism of
'right' $\Gamma$-modules $\Gamma\cong Y_i\Gamma$ induces an
isomorphism

\begin{center}
$\frac{\Gamma
Y_i\Gamma}{fY_i\Gamma}=\frac{Y_i\Gamma}{Y_is_i(f)\Gamma}\stackrel{\cong}{\longleftrightarrow}\frac{\Gamma}{(s_i(f))}$.
\end{center}
It follows that $\frac{\Gamma Y_i\Gamma}{fY_i\Gamma +\Gamma
Y_ig}\cong\frac{\Gamma
Y_i\Gamma}{fY_i\Gamma}\otimes_\Gamma\frac{\Gamma}{(g)}$ is nonzero
if an only if
$\frac{\Gamma}{(s_i(f))}\otimes_\Gamma\frac{\Gamma}{(g)}\neq 0$.
This happens exactly when $s_i(f)$ and $g$ are not coprime.

We pass now to prove the statement. If $s_i(f)$ is not coprime with
$t_1-1$, for some $i=1,...,n$, then last paragraph applies with
$g=t_1-1$. So we assume that $s_i(f)$ is coprime with $t_1-1$ for
all $i=1,...,n$. (\emph{Note that this situation can actually
happen. For instance if $f=a+b(t_1-2)^m$,  with $m>0$ $a,b\in K$ and
$a\neq 0\neq a+(-1)^mb$}). We then put $f':=s_1(f)$ and express it
as a sum $\sum_{0\leq k\leq r}g_k(t_2,...,t_n)(t_1-1)^k$. Then we
get

\begin{center}
$\Gamma =f'\Gamma+(t_1-1)\Gamma=g_0\Gamma +(t_1-1)\Gamma$
\end{center}
and it is easy to derive from this that $g_0$ is a constant
polynomial, so that we can rewrite

\begin{center}
$f'(t_1,...,t_n)=a+(t_1-1)^mg(t_1,...,t_2)$,
\end{center}
where $g\in\Gamma\setminus\{0\}$ and $a\in K\setminus\{0\}$. Note
that, given any index $i=2,\ldots,n$,  we cannot have
$g(t_1,\ldots,t_{i-1},\alpha ,t_{i+1},\ldots,t_n)=0$, for all
$\alpha\in K$. Indeed in that case  the polynomial $g$ would be
zero. We then choose $\alpha\in K$ such that
$g(t_1,\alpha,t_3,\ldots,t_n)\neq 0$ and claim that $f'$ and
$t_2-\alpha$ are not coprimes. To see that, note that $f'$ and
$t_2-\alpha$ are coprime if, and only if,
$\bar{f}':=f'+(t_2-\alpha )$ is invertible in $\Gamma/(t_2-\alpha
)$. Using the canonical isomorphism

\begin{center}
$K[t_1,\ldots,t_n]/(t_2-\alpha)\stackrel{\cong}{\longrightarrow}K[t_1,t_3,\ldots,t_n]$

($\bar{h}\rightsquigarrow h(t_1,\alpha,t_3,\ldots,t_n)$),
\end{center}
we immediately find a polynomial $u\in K[t_1,t_3,\ldots,t_n]$
satisfying the equality

\begin{center}
$f'(t_1,\alpha,t_3,\ldots,t_n)u(t_1,t_3,\ldots,t_n)=1$
\end{center}
in $K[t_1,t_3,\ldots,t_n]$. It follows that $$f'(t_1,\alpha
,t_3,\ldots,t_n)=1+(t_1-1)^{m}g(t_1,\alpha ,t_2,\ldots,t_n)$$ is a
constant polynomial, something which can only happen when
$g(t_1,\alpha ,t_2,\ldots,t_n)=0$. But this contradicts our choice
of $\alpha$.

Put now $h:=t_2-\alpha$. We then get that $f\equiv_{Y_1}h$ since
$f'=s_1(f)$ is not coprime with $h=t_2-\alpha$. On the other hand,
we also have $h\equiv_{Y_2} t_1-1$ since
$s_2(h)=h(t_2+1)=t_2+1-\alpha$ is not coprime with $t_1-1$. We then
conclude that $f\equiv t_1-1$ as desired.
\end{proof}

We end the section with a result on extensions of $U$-modules.

\begin{proposition} \label{extension_groups}
Let $M$ and $N$ be nonzero $U$-modules and suppose that
$\frac{\Gamma u\Gamma}{\mathbf{q}u\Gamma +\Gamma u\mathbf{p}}=0$,
for all $u\in U$, $\mathbf{q}\in \Ass (M)$ and $\mathbf{p}\in \Ass (N)$.
The following assertions hold:

\begin{enumerate}
\item $\Ext_\Gamma^i(M,N)=0=\Ext_\Gamma^i(N,M)$, for all $i\geq 0$
\item $\Ext_U^1(N,M)=0$
\end{enumerate}
\end{proposition}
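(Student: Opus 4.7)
My plan is to first deduce mutual coprimality of $\Ass(M)$ and $\Ass(N)$ from the hypothesis by specialising to $u=1$, which yields $\mathbf{q}+\mathbf{p}=\Gamma$ for every $\mathbf{q}\in\Ass(M)$ and $\mathbf{p}\in\Ass(N)$. Assertion (1) will then be a direct application of Proposition \ref{coprime_assassins}. I also record for later use that the supports $Z := \Supp(M)$ and $Z' := \Supp(N)$ are consequently disjoint closed-under-specialization subsets of $\Spec\Gamma$.

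For assertion (2), I will split an arbitrary extension $0\to M\to E\xrightarrow{\pi}N\to 0$ of $U$-modules. Part (1) gives $\Ext_\Gamma^1(N,M)=0$, hence a $\Gamma$-linear section $s:N\to E$ and a $\Gamma$-decomposition $E=M\oplus s(N)$. The key move is to identify $s(N)$ with the torsion submodule $E_N := t_{Z'}(E)$: the inclusion $s(N)\subseteq E_N$ holds because $s(N)\cong N$ has support in $Z'$, while for the reverse inclusion, if $x=m+s(n)\in E_N$ then $\Gamma m$ is a $\Gamma$-quotient of $\Gamma x$, forcing $\Supp(\Gamma m)\subseteq \Supp(\Gamma x)\cap\Supp(M)\subseteq Z'\cap Z=\emptyset$ and hence $m=0$.

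The main obstacle, and the step where the full hypothesis is essential, is showing that $E_N$ is stable under the $U$-action. Given $x\in E_N$ and $u\in U$, I need $\Supp(\Gamma ux)\subseteq Z'$, knowing a priori only that it lies in $\Supp(E)\subseteq Z\cup Z'$. Assuming for contradiction that some $\mathbf{q}'\in\Supp(\Gamma ux)$ belongs to $Z$, Lemma \ref{crucial_lemma_on_assassins}(1) produces $\mathbf{p}'\in\Ass(\Gamma x)\subseteq Z'$ with $\mathbf{q}'\equiv_u\mathbf{p}'$. I will then choose $\mathbf{q}''\in\Min\Ass(M)$ with $\mathbf{q}''\subseteq\mathbf{q}'$ and $\mathbf{p}''\in\Min\Ass(N)$ with $\mathbf{p}''\subseteq\mathbf{p}'$. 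The inclusion $\mathbf{q}''u\Gamma+\Gamma u\mathbf{p}''\subseteq \mathbf{q}'u\Gamma+\Gamma u\mathbf{p}'$ exhibits the nonzero quotient witnessing $\mathbf{q}'\equiv_u\mathbf{p}'$ as a further quotient of the one for $\mathbf{q}''\equiv_u\mathbf{p}''$, contradicting the hypothesis that the latter vanishes.

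Once $E_N$ is known to be a $U$-submodule, I conclude that $E = M\oplus E_N$ in $U-\Mod$: the sum is direct because $M\cap E_N$ has empty $\Gamma$-support (being simultaneously $\mathcal{T}_Z$- and $\mathcal{T}_{Z'}$-torsion), and $\pi(E_N)\supseteq\pi(s(N))=N$. This produces the required $U$-linear splitting and establishes $\Ext_U^1(N,M)=0$.
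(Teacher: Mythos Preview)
Your argument is correct and takes a genuinely different route from the paper's own proof of assertion~(2). Both proofs handle assertion~(1) identically, by specialising to $u=1$ and invoking Proposition~\ref{coprime_assassins}, and both start assertion~(2) from the $\Gamma$-splitting of the extension guaranteed by part~(1). After that point the two arguments diverge.

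The paper encodes the failure of the $\Gamma$-section to be $U$-linear as a $K$-bilinear ``cocycle'' $\mu:U\times N\longrightarrow M$, observes that it induces a $\Gamma$-homomorphism $\mu':U\otimes_\Gamma N\longrightarrow M$, and then shows $\mu'=0$ by a support/Nakayama argument: if $\mathbf{q}\in\Ass(\Im\mu')\subseteq\Ass(M)$, one finds $u\in U$ and $y\in N$ with $\frac{\Gamma u\Gamma}{\mathbf{q}u\Gamma}\otimes_\Gamma\Gamma y\neq 0$, and then the argument of Lemma~\ref{crucial_lemma_on_assassins}(1) produces $\mathbf{p}\in\Ass(\Gamma y)\subseteq\Ass(N)$ with $\frac{\Gamma u\Gamma}{\mathbf{q}u\Gamma+\Gamma u\mathbf{p}}\neq 0$, contradicting the hypothesis.

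Your approach is instead torsion-theoretic: you identify the $\Gamma$-complement $s(N)$ with the $Z'$-torsion submodule $t_{Z'}(E)$ (using that $Z=\Supp(M)$ and $Z'=\Supp(N)$ are disjoint), and then prove directly that $t_{Z'}(E)$ is a $U$-submodule by applying Lemma~\ref{crucial_lemma_on_assassins}(1) and descending to minimal associated primes. The descent step, passing from $(\mathbf{q}',\mathbf{p}')$ to $(\mathbf{q}'',\mathbf{p}'')\in\Ass(M)\times\Ass(N)$ via the surjection of quotients, is exactly what is needed to invoke the hypothesis and is carried out correctly. What your approach buys is a closer fit with the torsion-theoretic framework of Section~\ref{sec-torsion}: you are in effect proving a bespoke liftability statement for the torsion class $\mathcal{T}_{Z'}$ restricted to the module $E$, whereas the paper's cocycle argument is independent of that machinery. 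Both arguments ultimately hinge on the same lemma and are of comparable length.
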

\begin{proof}
1) By taking $u=1$ above, we see that $\mathbf{p}$ and $\mathbf{q}$
are coprime whenever $\mathbf{p}\in \Ass (M)$ and $\mathbf{q}\in
\Ass (N)$. The assertion follows from Proposition \ref{coprime_assassins}.

2) Let $0\rightarrow M\longrightarrow X\longrightarrow N\rightarrow
0$ be an exact sequence in $U-\Mod.$ By assertion 1 we know that it
split in $\Gamma -\Mod$. Then we shall identify $X=M\oplus N$, in
which case the external multiplication map $U\times X\longrightarrow
X$ ($(u,x)\rightsquigarrow u\cdot x$) is entirely determined by the
$U$-module structures on $M$ and $N$ and by a $K$-bilinear map $\mu
:U\times N\longrightarrow M$ satisfying the following three
properties for all $u,u'\in U$, $g\in\Gamma$ and $y\in N$:

\begin{enumerate}
\item $\mu (uu',y)=u\mu (u',y)+\mu (u,u'y)$ (this guarantees that $(uu')\cdot y=u\cdot (u'\cdot y)$)
\item $\mu (g,y)=0$, for all $g\in\Gamma$ (this guarantees that the structure of $\Gamma$-module on $M\oplus N$ given by
restriction of scalars via the inclusion $j:\Gamma\hookrightarrow U$
is that of the direct sum)
\item $u\cdot y=\mu (u,y)+uy$ (this guarantees that the projection $\begin{pmatrix}0 & 1 \end{pmatrix}:X=M\oplus N\longrightarrow
N$ is a $U$-homomorphism)
\end{enumerate}
It follows  that the assignment $u\otimes y\rightsquigarrow\mu
(u,y)$  defines a homomorphism of $\Gamma$-modules $\mu '
:U\otimes_\Gamma N\longrightarrow M$.

We claim that $\mu '=0$. Suppose  not and take $\mathbf{q}\in
\Ass (\Im (\mu '))\subseteq \Ass (M)$. The surjective
$\Gamma$-homomorphism $U\otimes_\Gamma N\twoheadrightarrow \Im (\mu
')$ induces another surjective $\Gamma$-homomorphism

\begin{center}
$\oplus_{u\in U,y\in N}\Gamma u\Gamma\otimes_\Gamma\Gamma
y\twoheadrightarrow \Im (\mu ')$.
\end{center}
In particular, we get that $\mathbf{q}\in \Supp (\Gamma
u\Gamma\otimes_\Gamma\Gamma y)$, for some $u\in U$ and $y\in N$.
Since $\Gamma u\Gamma\otimes_\Gamma\Gamma y$ is an epimorphic image
in $\Gamma -\Mod$ of $\Gamma u\Gamma$, which is finitely generated as
'left' $\Gamma$-modules, it follows that $\Gamma
u\Gamma\otimes_\Gamma\Gamma y$ is a finitely generated
$\Gamma$-module and thereby that $\mathbf{q}(\Gamma
u\Gamma\otimes_\Gamma\Gamma y)\neq \Gamma
u\Gamma\otimes_\Gamma\Gamma y$. That means that the left arrow in
the exact sequence

\begin{center}
$\mathbf{q}u\Gamma\otimes_\Gamma\Gamma y\longrightarrow\Gamma
u\Gamma\otimes_\Gamma\Gamma y\longrightarrow\frac{\Gamma
u\Gamma}{\mathbf{q}u\Gamma}\otimes_\Gamma\Gamma y\rightarrow 0$
\end{center}
is not surjective, and hence that $\frac{\Gamma
u\Gamma}{\mathbf{q}u\Gamma}\otimes_\Gamma\Gamma y\neq 0$. The
argument of Lemma \ref{crucial_lemma_on_assassins}(1) shows that
there exists a $\mathbf{p}\in \Ass (\Gamma y)\subseteq \Ass (N)$ such
that $\frac{\Gamma
u\Gamma}{\mathbf{q}u\Gamma}\otimes_\Gamma\frac{\Gamma}{\mathbf{p}}\neq
0$. We then get $\frac{\Gamma u\Gamma}{\mathbf{q}u\Gamma +\Gamma
u\mathbf{p}}\neq 0$, which contradicts the hypothesis.
\end{proof}

\section{Applications and some open questions}

We start with a proposition which will be useful in the sequel for
its hypotheses are satisfied by all examples of this final section.

\begin{proposition} \label{final proposition}
In the setup \ref{setup}  suppose  in addition that the following
conditions hold:

\begin{enumerate}
\item If $Z=Z(U)$ is the center  of $U$ then $Z\cap\Gamma$
is equidimensional (see \cite{mat}, p. 250) \item $\Gamma$ is flat
as a $Z\cap\Gamma$-module
\item For each simple $U$-module, the endomorphism algebra
$End_U(M)$ has dimension equal to $1$ as a $K$-vector space.
\end{enumerate}
If  $U-fl$ denotes the subcategory of $U$-modules of finite length,
then $\mathcal{T}_i\cap U-fl=\mathcal{T}_j\cap U-fl$, for all
$i,j\geq Kdim(\Gamma )-Kdim(Z\cap\Gamma )$.
\end{proposition}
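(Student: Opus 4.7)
The plan is to show the stronger statement that every finite length $U$-module belongs to $\mathcal{T}_d$, where $d := Kdim(\Gamma ) - Kdim(Z\cap\Gamma )$. Since the $\mathcal{T}_i$ form an ascending chain, this yields $\mathcal{T}_i\cap U\text{-}fl = \mathcal{T}_j\cap U\text{-}fl = U\text{-}fl$ for all $i,j\geq d$. By Theorem~\ref{liftability_of_canonical_torsion_theories}(1), $\mathcal{T}_d$ lifts to a torsion class in $U\text{-}\Mod$, which is in particular closed under extensions; combined with the existence of composition series, this reduces the whole statement to showing that every simple $U$-module $M$ belongs to $\mathcal{T}_d$.

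The first step is a central character argument. Hypothesis (3) gives $\End_U(M)=K$, and since every $z\in Z$ acts on $M$ by a $U$-linear map, $z$ acts as a scalar $\chi(z)\in K$. Thus $\chi:Z\longrightarrow K$ is a $K$-algebra homomorphism, and its restriction to $Z\cap\Gamma$ has as kernel a maximal ideal $\mathbf{m}$ of $Z\cap\Gamma$ that annihilates $M$. Consequently, for every $\mathbf{p}\in\Ass_\Gamma(M)$ we have $\mathbf{m}\subseteq\mathbf{p}$, which by maximality forces $\mathbf{p}\cap (Z\cap\Gamma )=\mathbf{m}$, i.e., $\mathbf{p}$ lies over $\mathbf{m}$ under the inclusion $Z\cap\Gamma\hookrightarrow\Gamma$.

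The second step is a dimension-theoretic estimate. By hypothesis (1), $Z\cap\Gamma$ is equidimensional, so $\hit(\mathbf{m})=Kdim(Z\cap\Gamma )$. By hypothesis (2), $\Gamma$ is flat over $Z\cap\Gamma$, so the going-down theorem for flat extensions applies: any saturated chain of prime ideals of $Z\cap\Gamma$ descending from $\mathbf{m}$ can be lifted to a chain of prime ideals of $\Gamma$ descending from $\mathbf{p}$. This yields $\hit_\Gamma(\mathbf{p})\geq\hit(\mathbf{m})=Kdim(Z\cap\Gamma )$. Combining this with the universally valid inequality $\hit(\mathbf{p})+\cht(\mathbf{p})\leq Kdim(\Gamma )$ gives $\cht(\mathbf{p})\leq d$ for every $\mathbf{p}\in\Ass_\Gamma(M)$.

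The final step invokes the main results of Section~3. By Theorem~\ref{liftability_of_canonical_torsion_theories}(2), the assassin $\Ass(M)$ consists of primes of a common coheight $i_M$, and we have just shown $i_M\leq d$. Hence $M\in\mathcal{T}_{i_M}\subseteq\mathcal{T}_d$, which closes the argument. The delicate point I expect to check most carefully is the flatness-going-down step: it relies on equidimensionality of $Z\cap\Gamma$ (to guarantee $\hit(\mathbf{m})=Kdim(Z\cap\Gamma )$ for \emph{every} maximal $\mathbf{m}$, not just some), and on the fact that the height-plus-coheight inequality is tight enough to transfer the bound on $\hit(\mathbf{p})$ into a usable bound on $\cht(\mathbf{p})$; everything else in the proof is formal consequence of the preceding sections.
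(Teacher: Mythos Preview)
Your proof is correct and follows essentially the same strategy as the paper: reduce to simple modules, use hypothesis~(3) to produce a maximal ideal $\mathbf{m}$ of $Z\cap\Gamma$ annihilating $M$, then combine equidimensionality of $Z\cap\Gamma$, flatness of $\Gamma$ over $Z\cap\Gamma$, and the inequality $\hit(\mathbf{p})+\cht(\mathbf{p})\leq Kdim(\Gamma)$ to bound $\cht(\mathbf{p})$ for every $\mathbf{p}\in\Ass(M)$.

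The only technical variation is in how flatness is exploited. The paper applies the dimension formula $\hit(\mathbf{p})=\hit(\mathbf{m})+Kdim(\Gamma_\mathbf{p}/\Gamma_\mathbf{p}\mathbf{m})$ (\cite{mat}, Theorem~15.1) to primes $\mathbf{p}$ \emph{minimal} over $\Gamma\mathbf{m}$, obtaining $\hit(\mathbf{p})=\hit(\mathbf{m})$ and hence $Kdim(\Gamma/\Gamma\mathbf{m})\leq Kdim(\Gamma)-Kdim(Z\cap\Gamma)$; this then bounds the coheight of \emph{all} primes containing $\mathbf{m}$. You instead invoke going-down for flat extensions directly on any $\mathbf{p}\in\Ass(M)$ to get $\hit(\mathbf{p})\geq\hit(\mathbf{m})$. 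The two arguments are equivalent (Matsumura's formula is itself proved via going-down), and yours is marginally more direct. Two small remarks: your citation of Theorem~\ref{liftability_of_canonical_torsion_theories}(1) for the reduction step is not strictly needed, since $j_*^{-1}(\mathcal{T}_d)$ is automatically a torsion class in $U\text{-}\Mod$ and hence closed under extensions; and in your final step, once you know $\Ass(M)\subseteq Z_d$ you can conclude $M\in\mathcal{T}_d$ immediately without passing through the common-coheight statement of Theorem~\ref{liftability_of_canonical_torsion_theories}(2).
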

\begin{proof}
 Let $M$ be a simple
$U$-module. Then the structural map $K\longrightarrow End_U(M)$ is
an algebra isomorphism, which we view as an identification. On the
other hand, every element $z\in Z$ induces by multiplication an
endomorphism  $\lambda_z\in End_U(M)$. Put $Z'=Z\cap\Gamma$. The
assignment $z\rightsquigarrow\lambda_z$ gives then an isomorphism

\begin{center}
$Z'/ann_{Z'}(M)\stackrel{\cong}{\longrightarrow}End_U(M)=K$,
\end{center}
thus showing that $\mathbf{m}:=ann_{Z'}(M)$ is a maximal ideal of
$Z'$. Let now $\mathbf{p}\in Spec(\Gamma )$ be minimal over
$\Gamma\mathbf{m}$. We clearly have $\mathbf{m}=Z'\cap\mathbf{p}$
and we have an equality

\begin{center}
$ht(\mathbf{p})=ht(\mathbf{m})+Kdim(\frac{\Gamma_\mathbf{p}}{\Gamma_\mathbf{p}\mathbf{m}})$
\end{center}
(cf. \cite{mat}[Theorem 15.1]). But the prime spectrum of
$\frac{\Gamma_\mathbf{p}}{\Gamma_\mathbf{p}\mathbf{m}}$ is in
bijection with the set of $\mathbf{q}\in Spec(\Gamma )$ such that
$\Gamma\mathbf{m}\subseteq\mathbf{q}\subseteq\mathbf{p}$. By our
choice of $\mathbf{p}$, this implies that
$Spec(\frac{\Gamma_\mathbf{p}}{\Gamma_\mathbf{p}\mathbf{m}})$ has
one element. It follows that
$Kdim(\frac{\Gamma_\mathbf{p}}{\Gamma_\mathbf{p}\mathbf{m}})=0$, so
that $ht(\mathbf{p})=ht(\mathbf{m})$, for all $\mathbf{m}\in
Specm(Z')$ and all  $\mathbf{p}\in Spec(\Gamma )$ minimal over
$\Gamma\mathbf{m}$.

Put $d:=Kdim(\Gamma)$ and $e:=Kdim(Z')$. Equidimensionality of $Z'$
gives that $ht(\mathbf{m})=e$ (cf. \cite{Ku}[Corollary II.3.6]).
Then from the last paragraph and the inequality

\begin{center}
$ht(\mathbf{p})+Kdim(\Gamma /\mathbf{p})\leq Kdim(\Gamma )$
\end{center}
we readily derive that

\begin{center}
$Kdim(\frac{\Gamma}{\Gamma\mathbf{m}})=Sup\{Kdim(\Gamma/\mathbf{p}):$
$\mathbf{p}\in Spec(\Gamma )\text{ minimal over
}\Gamma\mathbf{m}\}\leq d-e$.
\end{center}
This says that the coheight of any $\mathbf{p}\in Spec(\Gamma )$
containing a maximal ideal of $Z'$ is always $\leq d-e$. In
particular that happens for all $\mathbf{p}\in Ass(M)$, for every
simple $U$-module $M$. It follows that the simple $U$-modules in
$\mathcal{T}_i$ are the same for all  $d-e\leq i\leq d$, which
implies the statement.
\end{proof}

\begin{remark}
Bearing in mind that our field is algebraically closed, condition
(3) in Proposition \ref{final proposition} is satisfied whenever $U$
admits an exhaustive filtration $U_0\subset U_1\subset...$ such that
the associated graded algebra $gr(U)$ is a commutative finitely
generated algebra (cf. \cite{Dix}[Lemma 2.6.4]). It is the case for
all finite $W$-algebras  (cf. \cite{BK1},Theorem 10.1 or \cite{GG},4.4).
\end{remark}

The following problems are of special interest in the case of
enveloping algebras of Lie algebras and finite W-algebras.

\begin{problems}\label{problems}
Suppose that $\Gamma$ and $U$ satisfy the conditions of Setup
\ref{setup} and also the hypotheses of Proposition \ref{final
proposition}. We propose the following problems:
\begin{enumerate}
\item To identify the set $\mathbf{N}_U$ of natural numbers $0\leq
j\leq d-e$ for which there exists a simple $U$-module $M$ such
that $t_j(M)=M$ and $t_{j-1}(M)=0$ (convening that $t_{-1}(M)=0$).
\item Given $j\in\mathbf{N}_U$, to identify the set of
$\mathbf{p}\in Spec(\Gamma )$ such that $cht(\mathbf{p})=j$ and
$\mathbf{p}\in Ass(M)$, for some simple $U$-module $M$ \item
(Local version) Given a character $\chi
:Z'=Z\cap\Gamma\longrightarrow K$, to identify the set
$\mathbf{N}(\chi )$ of natural numbers $0\leq j\leq d-e$ for which
there exists a simple $U$-module $M$ annihilated by $Ker(\chi )$
with $t_j(M)=M$ and $t_{j-1}(M)=0$. For any  $j\in\mathbf{N}(\chi
)$, to identify all $\mathbf{p}\in Spec(\Gamma)$ such that
$cht(\mathbf{p})=j$, $Ker(\xi )\subset \mathbf{p}$ and
$\mathbf{p}\in Ass(M)$ for some simple $U$-module $M$.
\end{enumerate}
\end{problems}

We move  now to the announced   classical examples.

\subsection{Finite W-algebras}
 Associated with a nilpotent element and a good grading in the Lie
 algebra $gl_n$, there is associated a finite $W$-algebra (see \cite{EK} for the definition and
 details). Each finite W-algebra of type $\mathbf{A}$ is determined
 by a sequence of integers $\tau =(p_1,\ldots, p_m)$ such that $1\leq p_1\leq \ldots\leq
 p_m$ and $p_1+\ldots+p_m=n$. We denote such an algebra by $W(\tau )$.
 If for each $k=1,\ldots, m$ we put $\tau_k=(p_1,\ldots, p_k)$, then we
 obtain a chain of subalgebras

 \begin{center}
 $W(\tau_1)\subset \ldots\subset W(\tau_m)=W(\tau )$.
 \end{center}
 The subalgebra $\Gamma$ of $W(\tau )$ generated by the centers of
 the $W(\tau_k)$ is a commutative algebra usually called the
 \emph{Gelfand-Tselin subalgebra} of $W(\tau )$.

 As shown in \cite{fo-Ga1} and \cite{fo-Ga2},  the algebra $U=W(\tau)$ and the
 commutative subalgebra $\Gamma$ satisfy all the
 conditions of Setup \ref{setup} and all the hypothesis of Proposition \ref{final proposition}, actually with $Z\subset\Gamma$ and hence $Z\cap\Gamma =Z$.  Moreover, we have $d=mp_1+(m-1)p_2+...+2p_{m-1}+p_m$ and
 $e=p_1+\ldots+p_m$ (see \cite{FMO} and \cite{BK1}), where $d$ and $e$ are as in Proposition \ref{final proposition}.
 In particular we get:

 \begin{corollary} \label{application_to_finite_W-algebras}
 Let us consider the natural number $r=(m-1)p_1+(m-2)p_2+\ldots+p_{m-1}$.
 The following assertions hold:

 \begin{enumerate}
\item The torsion theories $(\mathcal{T}_i,\mathcal{F}_i)$
($i=0,1,\ldots, d$) are liftable from $\Gamma -\Mod$ to $W(\tau
)-\Mod$. \item If $M$ is a simple $W(\tau )$-module then there is
a unique natural number $0\leq j\leq r$ such that $t_j(M)=M$ and
$t_{j-1}(M)=0$. In this case all prime ideals in $\Ass (M)$ have
coheight exactly $j$.
\end{enumerate}
 \end{corollary}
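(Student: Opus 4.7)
The plan is to reduce the corollary to the two general results already proved, namely Theorem~\ref{liftability_of_canonical_torsion_theories} and Proposition~\ref{final proposition}, after checking that the Gelfand--Tsetlin pair $(\Gamma,W(\tau))$ falls in their scope.

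First I would verify the inputs. By the results of \cite{fo-Ga1} and \cite{fo-Ga2} cited just above the statement, the pair $(W(\tau),\Gamma)$ satisfies Setup~\ref{setup}: $\Gamma$ is finitely generated commutative, $W(\tau)$ is finitely generated over $\Gamma$, and $\Gamma$ is a Harish--Chandra subalgebra. Moreover the three hypotheses of Proposition~\ref{final proposition} hold, with the additional simplification $Z\subseteq\Gamma$ so that $Z\cap\Gamma=Z$ is the (polynomial) center. In particular $Z$ is equidimensional, $\Gamma$ is flat over $Z$ (since both are polynomial rings and the inclusion can be arranged to make $\Gamma$ free over $Z$, by \cite{FMO}), and endomorphism rings of simple $W(\tau)$-modules are one-dimensional, as follows from the existence of a good filtration with commutative associated graded (see the Remark after Proposition~\ref{final proposition}, applied via \cite{BK1} or \cite{GG}).

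Once the setup and the hypotheses of Proposition~\ref{final proposition} are in place, assertion~(1) is just Theorem~\ref{liftability_of_canonical_torsion_theories}(1) restricted to $i=0,1,\ldots,d$, where $d=\gkdim\Gamma=mp_1+(m-1)p_2+\cdots+p_m$. For assertion~(2), Theorem~\ref{liftability_of_canonical_torsion_theories}(2) already produces, for each simple $W(\tau)$-module $M$, a unique natural number $j$ with $t_j(M)=M$, $t_{j-1}(M)=0$, and with every $\mathbf{p}\in\Ass(M)$ of coheight exactly $j$.

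The only genuine task, then, is the bound $j\leq r$, where $r=(m-1)p_1+(m-2)p_2+\cdots+p_{m-1}$. Here I would invoke Proposition~\ref{final proposition}, which asserts that the simple objects of $\mathcal{T}_i\cap U\text{-}fl$ stabilize for $i\geq d-e$, where $e=\gkdim Z$. From the formulas $d=mp_1+(m-1)p_2+\cdots+p_m$ and $e=p_1+p_2+\cdots+p_m$ (obtained in \cite{FMO} and \cite{BK1}) a direct subtraction gives $d-e=(m-1)p_1+(m-2)p_2+\cdots+p_{m-1}=r$. Since every simple $U$-module lies in $\mathcal{T}_d=\Gamma$-$\Mod$ and hence, by the stabilization in Proposition~\ref{final proposition}, already lies in $\mathcal{T}_{r}$, the minimal $j$ from Theorem~\ref{liftability_of_canonical_torsion_theories}(2) must satisfy $j\leq r$, finishing the proof. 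The only real obstacle I anticipate is making sure the references yield precisely the flatness and endomorphism-ring conditions needed by Proposition~\ref{final proposition}; everything else is bookkeeping with the numerical invariants $d$ and $e$.
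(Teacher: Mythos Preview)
Your proposal is correct and follows precisely the route the paper takes: the corollary is stated immediately after the paragraph verifying that $(W(\tau),\Gamma)$ satisfies Setup~\ref{setup} and the hypotheses of Proposition~\ref{final proposition} (with $Z\subset\Gamma$ and the explicit values of $d$ and $e$), and the paper leaves the reader to apply Theorem~\ref{liftability_of_canonical_torsion_theories} and Proposition~\ref{final proposition} together with the computation $d-e=r$. Your write-up simply makes this implicit derivation explicit.
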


Note that in the case $m=n$ and $p_1=\ldots=p_m=1$ the
corresponding $W$-algebra is isomorphic to $U(\gl_n)$.

\subsection{The Lie algebra $\gl_n$}
Given any positive integer $n$ and any basis $\pi =\{\alpha_1,
\ldots,\alpha_n\}$ of the root system of the Lie algebra $\gl_n$,
we denote by $\gl_i$ the Lie subalgebra corresponding to the
simple roots $\alpha_1,\ldots, \alpha_i$. We then have  inclusions
of Lie algebras

\begin{center}
$\gl_1\subset \gl_2\subset \ldots\subset \gl_n$
\end{center}
inducing corresponding inclusions of associative algebras

\begin{center}
$U_1\subset U_2\subset \ldots\subset U_n$,
\end{center}
where $U_k=U(\gl_k)$ is the universal enveloping algebra of $\gl_k$
for each $k>0$. If we put $U=U_n$ then the subalgebra $\Gamma (\pi
)$ of $U$ generated by the centers of  $U_1, \ldots, U_n$ is a
maximal commutative subalgebra, called the \emph{Gelfand-Tsetlin
subalgebra} of $U$ associated to the root system $\pi$. The
inclusion $\Gamma (\pi )\subset U$ satisfies all the requirements of
Setup \ref{setup} and the hypotheses of Proposition \ref{final
proposition}, again with $Z\subseteq\Gamma$. Concretely $\Gamma (\pi )$ is isomorphic to a
polynomial algebra on $\frac{n(n+1)}{2}$ variables (cf.
\cite{fo-Ga1}, \cite{fo-Ga2}) while the center $Z=Z(U)$ is a
polynomial algebra on $n$ variables. We therefore have:

\begin{corollary} \label{application_to_finite_W-algebras1}
 The following assertions hold:

 \begin{enumerate}
\item The torsion theories $(\mathcal{T}_i,\mathcal{F}_i)$
($i=0,1,\ldots, \frac{n(n+1)}{2}$) are liftable from $\Gamma
(\pi)-\Mod$ to $U(\gl_n)-\Mod$. \item If $M$ is a simple
$\gl_n$-module then there is a unique natural number $0\leq j\leq
\frac{n(n-1)}{2}$ such that $t_j(M)=M$ and $t_{j-1}(M)=0$. In this
case all prime ideals in $\Ass (M)$ have coheight exactly $j$.
\end{enumerate}
 \end{corollary}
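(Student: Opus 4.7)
The plan is to reduce the entire statement to the previously established machinery. The first step is to invoke the results of \cite{fo-Ga1} and \cite{fo-Ga2} to certify that the pair $(U(\gl_n),\Gamma(\pi))$ fits into Setup \ref{setup}: namely, $\Gamma (\pi )$ is finitely generated as a $K$-algebra (indeed, it is a polynomial algebra in $n(n+1)/2$ variables), $U(\gl_n)$ is generated over $\Gamma(\pi )$ by finitely many elements $u_1,\ldots,u_r$ (which can be taken from a PBW basis of $U_n$ relative to $U_{n-1}$ and so on), and $\Gamma(\pi )$ is a Harish-Chandra subalgebra of $U(\gl_n)$. In addition, $Z=Z(U)\subseteq \Gamma (\pi )$, so that $Z\cap\Gamma(\pi )=Z$ is equidimensional and $\Gamma(\pi )$ is trivially flat as a module over itself, while condition (3) of Proposition \ref{final proposition} follows from the remark after that proposition, since $U(\gl_n)$ carries a filtration (the PBW filtration) whose associated graded algebra is the polynomial algebra on $\gl_n^*$.

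With these verifications in hand, assertion (1) is an immediate application of Theorem \ref{liftability_of_canonical_torsion_theories}(1): since $Kdim(\Gamma (\pi ))=\frac{n(n+1)}{2}$, every $Z_i$ with $0\leq i\leq \frac{n(n+1)}{2}$ yields a liftable torsion theory $(\mathcal{T}_i,\mathcal{F}_i)$.

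For assertion (2), the existence and uniqueness of a natural number $j$ with $t_j(M)=M$ and $t_{j-1}(M)=0$ together with the statement that all primes in $\Ass (M)$ have coheight exactly $j$ is exactly the content of Theorem \ref{liftability_of_canonical_torsion_theories}(2) applied to the simple $U(\gl_n)$-module $M$. What remains is to establish the upper bound $j\leq \frac{n(n-1)}{2}$. The key observation is that the proof of Proposition \ref{final proposition} shows, in its middle paragraphs, that the coheight of every prime $\mathbf{p}\in \Ass (M)$ for a simple $U$-module $M$ is bounded by $Kdim(\Gamma )-Kdim(Z\cap\Gamma )$. In our case this bound is $\frac{n(n+1)}{2}-n=\frac{n(n-1)}{2}$, which yields the desired inequality.

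There is no real obstacle here since all the heavy lifting is in the general results already proved; the only point that requires care is the computation $Kdim(\Gamma(\pi ))-Kdim(Z)=\frac{n(n-1)}{2}$ and the verification that the hypotheses of Setup \ref{setup} and Proposition \ref{final proposition} genuinely hold for $\Gamma(\pi )\subset U(\gl_n)$. These are precisely the nontrivial inputs borrowed from \cite{fo-Ga1,fo-Ga2} and \cite{Dix}, and once cited the corollary follows formally.
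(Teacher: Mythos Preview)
Your proof is essentially the same as the paper's: the corollary is stated after a paragraph that verifies $\Gamma(\pi)\subset U(\gl_n)$ satisfies Setup~\ref{setup} and the hypotheses of Proposition~\ref{final proposition} (with $Z\subseteq\Gamma$, $Kdim(\Gamma(\pi))=\tfrac{n(n+1)}{2}$, $Kdim(Z)=n$), and then the assertions follow from Theorem~\ref{liftability_of_canonical_torsion_theories} together with the coheight bound $d-e$ extracted from the proof of Proposition~\ref{final proposition}.

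One slip to correct: you write that ``$\Gamma(\pi)$ is trivially flat as a module over itself,'' but the hypothesis~(2) of Proposition~\ref{final proposition} requires $\Gamma(\pi)$ to be flat over $Z\cap\Gamma(\pi)=Z$, not over itself. This is what is actually used (via \cite{mat}[Theorem~15.1]) in the proof of that proposition to obtain the height equality $ht(\mathbf{p})=ht(\mathbf{m})$. The flatness of $\Gamma(\pi)$ over $Z$ is not automatic; it is part of the structural input taken from \cite{fo-Ga1,fo-Ga2} (both are polynomial rings and the extension is free, hence flat). With that correction your argument is complete.
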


 An interesting phenomenon for $U_n=U(gl_n)$ is that there are
several
 Gelfand-Tsetlin subalgebras to which we can apply our general
 theory, namely, one per each choice of a basis of the root system. We  denote by ${\mathcal{T}}_i(\pi)$ the
 class of
 $U_n$-modules $M$ such that, viewed as $\Gamma (\pi)$-module, $M$
belongs to $\mathcal{T}_i$. Since different root systems are
 conjugated by the Weyl group, one immediately gets:

 \begin{proposition} \label{different_root_systems}
 Let $\pi$ and $\pi'$ be two bases of the root systems of $\gl_n$. The categories
 ${\mathcal{T}}_i(\pi)$ and ${\mathcal{T}}_i(\pi')$ are
 equivalent for any $i$.
\end{proposition}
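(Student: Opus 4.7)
The plan is to realize the desired equivalence by twisting with an inner automorphism of $U=U(\gl_n)$ coming from a Weyl-group element. The Weyl group $W$ of $\gl_n$ is the symmetric group $S_n$, which acts transitively on the (ordered) bases of the root system and is realized inside $GL_n(K)$ by permutation matrices; hence each element $w\in W$ gives rise to an inner automorphism of the associative algebra $U$. Choosing $w\in W$ with $w(\pi)=\pi'$ and any representative permutation matrix $\tilde{w}\in GL_n(K)$, I would put $\phi(u):=\tilde{w}u\tilde{w}^{-1}$ for all $u\in U$.

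The first thing to check is that $\phi(\Gamma(\pi))=\Gamma(\pi')$. Since $w$ carries the ordered simple roots of $\pi$ to those of $\pi'$, conjugation by $\tilde{w}$ maps the chain $\gl_1(\pi)\subset\ldots\subset\gl_n(\pi)$ onto the chain $\gl_1(\pi')\subset\ldots\subset\gl_n(\pi')$, and therefore maps the center of each $U(\gl_k(\pi))$ isomorphically onto the center of $U(\gl_k(\pi'))$. Since $\Gamma(\pi)$ (resp.~$\Gamma(\pi')$) is by definition generated by these centers, one concludes that $\phi(\Gamma(\pi))=\Gamma(\pi')$.

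Next, I would invoke the standard twist construction: define $F_\phi:U-\Mod\longrightarrow U-\Mod$ by letting $F_\phi(M)$ have the same underlying $K$-vector space as $M$, with the new action $u\star m:=\phi(u)m$. This is an equivalence of $K$-linear categories with quasi-inverse $F_{\phi^{-1}}$. It then suffices to check that $F_\phi$ restricts to an equivalence $\mathcal{T}_i(\pi')\simeq\mathcal{T}_i(\pi)$. For this, observe that as a $\Gamma(\pi)$-module, $F_\phi(M)$ is just $M$ with action pulled back along the algebra isomorphism $\phi|_{\Gamma(\pi)}:\Gamma(\pi)\stackrel{\cong}{\longrightarrow}\Gamma(\pi')$. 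Hence $\Ass_{\Gamma(\pi)}(F_\phi(M))=\phi^{-1}(\Ass_{\Gamma(\pi')}(M))$, and since $\phi^{-1}$ induces a homeomorphism $\Spec\Gamma(\pi')\cong\Spec\Gamma(\pi)$ preserving inclusions, it also preserves the coheight of prime ideals. Consequently, every prime in $\Ass_{\Gamma(\pi')}(M)$ has coheight $\leq i$ if and only if every prime in $\Ass_{\Gamma(\pi)}(F_\phi(M))$ does, which yields the required equivalence.

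The main subtle point lies in the second paragraph, namely the verification that a Weyl-group element really carries the entire chain of subalgebras $\gl_1(\pi)\subset\ldots\subset\gl_n(\pi)$ onto the corresponding chain for $\pi'$, and not just some common part such as the Cartan. This reduces to identifying $\gl_k(\pi)$ as the Lie subalgebra generated by the root spaces for $\pm\alpha_1,\ldots,\pm\alpha_k$ together with an appropriate portion of the Cartan, after which Weyl-equivariance of the root-space decomposition does the rest. Once this is secured, the remaining arguments are purely formal, exploiting only that pullback along a ring isomorphism preserves the assassin and that coheight is preserved under homeomorphism of prime spectra.
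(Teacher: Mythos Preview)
Your argument is correct and is precisely the route the paper indicates (the paper's entire proof is the single remark ``different root systems are conjugated by the Weyl group''); you have simply fleshed out that remark. One small quibble: for $n\geq 3$ the Weyl group $S_n$ does not literally act transitively on \emph{ordered} bases of the $A_{n-1}$ root system (there are two orbits, interchanged by the Dynkin automorphism $\alpha_i\mapsto\alpha_{n-i}$), but it does act transitively on the resulting chains $\gl_1\subset\cdots\subset\gl_n$---equivalently on complete coordinate flags in $K^n$---since an ordered base $\pi$ and its negative $-\pi$ determine the same chain; this weaker transitivity is all your argument actually uses.
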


Concerning Problem \ref{problems}(1), it is well-known that
$0\in\mathbf{N}_U$   when $U$ is a finite $W$-algebra of type
$\mathbf{A}$. For the particular case $U=U(gl_n)$ we  have that
$1\in\mathbf{N}_U$, as the following example show.

 \begin{example}There are simple $\gl_n$-modules which are  not
in ${\mathcal{T}}_0$ for all $n>1$.
\end{example}
\begin{proof}
  Consider any generic simple non-weight (with respect to any Cartan subalgebra) $\gl_2$-module
  $V$, such modules exist by \cite{bl}. Then
$V\in {\mathcal{T}}_1$ and is not Gelfand-Tsetlin. Let $H$ be a
Cartan subalgebra of $\gl_3$. Fix $a\in \mathbb C$. Let
$(c_1,c_2)$ be the central character of $V$ ($c_1$ is an
eigenvalue of $e_{11}+e_{22}$ and $c_2$ is an eigenvalue of the
quadratic Casimir element). Let $\mathfrak P$ be a parabolic
subalgebra of $\gl_3$ whose Levi factor is $\gl_2+H$. Now consider
the induced module $M(V, a)=U(\gl_3)\otimes_{U(\mathfrak P)} V$
where $V$ is naturally viewed as a $\mathfrak P$-module with a
trivial action of the radical and $e_{11}+e_{22}+e_{33}$ acts by
multiplication by $a$. Then $M(V, a)$ has a unique simple quotient
$L(V, a)$ which belongs to the subcategory ${\mathcal{T}}_1\subset
\gl_3-\Mod$ and is not Gelfand-Tsetlin. Similarly, one can induce
now from $L(V, a)$ to get a $\gl_4$-module with a unique simple
quotient in ${\mathcal{T}}_1\subset \gl_4-\Mod$ which is not
Gelfand-Tsetlin. One continues inductively. Hence, for each $n\geq
2$ we construct a simple $\gl_n$-module in ${\mathcal{T}}_1$ which
is not Gelfand-Tsetlin.
\end{proof}

 \section{Acknowledgment}
The first  author is supported in part by the CNPq grant (301743/
2007-0) and by the Fapesp grant (2005/ 60337-2). The third author is
supported by research projects from the D.G.I. of the Spanish
Ministry of Education and the Fundaci\'{o}n S\'{e}neca of Murcia
(Grupos de Excelencia), with a part of FEDER funds. This work
 was done during a visit of the first author to the
Universidad de Murcia, funded by the Fundaci\'{o}n S\'{e}neca, and a
sabbatical visit of the third author to the Universit\'{e} de Paris
7 funded by the Spanish Ministry of Education. Both authors thank
the support of the institutions and the hospitality of the
respective Universities.

\end{document}